\newdimen\plusheight
\def\+{\;\lower\plusheight\hbox{$+$}\;}
\newdimen\minusheight
\def\-{\;\lower\minusheight\hbox{$-$}\;}
\newdimen\cdotsheight
\def\cds{\lower\cdotsheight\hbox{$\cdots$}}
\numberwithin{equation}{section}
\theoremstyle{plain}
\newtheorem{theorem}{Theorem}[section]
\newtheorem{lemma}{Lemma}[section]
\newtheorem{example}{Example}[section]
\newtheorem{definition}{Definition}[section]
\newtheorem{proposition}{Proposition}[section]
\newtheorem{remark}{Remark}[section]
\newtheorem{note}{Note}[section]
\def\mytitle#1{\setcounter{equation}{0}
\setcounter{footnote}{0}
\begin{flushleft}\Large\textbf{#1}\end{flushleft}
\vspace{0.20cm}}
\def\myname#1{\leftline{{\large #1}}\vspace{-0.13cm}}
\def\myplace#1#2{\small\begin{flushleft}\textit{#1}\\
\texttt{#2}\end{flushleft}}
\def\myclassification#1{\small\noindent
Keywords : Strong-$I^K$-Convergence, Strong-$(I\vee K)^K$-Convergence, Strong-$I^K$-Cauchy, Strong-$I^K$-Limit Points.
       #1\vspace{0.5cm}\\
			AMS subject classification$(2010)$: Primary: 54E70 ; Secondary: 40A35}
\begin{document}
\mytitle{Strong-$I^K$-Convergence in Probabilistic Metric Spaces}

\myname{$Amar Kumar Banerjee^{\dag}$\footnote{akbanerjee@math.buruniv.ac.in;akbanerjee1971@gmail.com} and  $Mahendranath~ Paul^{\dag}$\footnote{mahendrabktpp@gmail.com}}
\myplace{$\dag$Department of Mathematics, The University of Burdwan, Purba Burdwan -713104, India.} {}
\begin{abstract}
In this paper we study the idea of strong-$I^K$-convergence of functions which is common generalization of strong-$I^*$-convergence of functions in probabilistic metric spaces. We also study strong-$I^{K}$-limit points of functions in the same space.
\end{abstract}
\myclassification{}
\section{Introduction}
The work of generalization of convergence of sequences were taken into consideration in the early sixties of twentieth century. The idea of usual convergence of a real sequence was extended to statistical convergence by H. Fast \cite{10} and then H. Steinhaus \cite{33} in the year $1951$ and later it was developed by several authors\cite{1,12,31,32}. Now we recall natural density of a set.
Let $\mathbb{N}$ denotes the set of natural numbers. If $K\subset \mathbb{N}$, then $K_n$ will denote the set $\{k \in K:k\leq n$\} and $|K_n|$ stands for the cardinality of $K_n$. The natural density of $K$ is then defined by
$$d(K)=\displaystyle{\lim_{n}}\frac{|K_n|}{n}$$
if the limit exits. A real sequence $\{x_n\}$ is said to be statistically convergent to $l$ if for every $\epsilon>0$ the set 
$K(\epsilon)=\{k\in\mathbb{N}:|x_k-l|\geq \epsilon\}$ has natural density zero\cite{10,20}. In the year 2000, the concept of $\lambda$-statistical convergence was introduced by Mursaleen in\cite{27} as an extension of statistical convergence. The another generalization of statistical convergence is the idea of ideal convergence(i.e. $I$ and $I^*$-convergence) which depends on the structure of ideals of subsets of the natural numbers introduced by P.Kostyrko et al.\cite{19}  in the beginning of twenty first century. $I$-convergence of real sequences coincides with the ordinary convergence if $I$ is the ideal of all finite subsets of $\mathbb{N}$ and with the statistically convergence if $I$ is the ideal of $\mathbb{N}$ of natural density zero\cite{17,19}.\\
The concept of $I^*$-convergence which is closely related to that of $I$-convergence was introduced  by P.Kostyrko et al.\cite{19}.  Subsequently the idea of $I$-convergence has been extended from the real number space to the metric spaces and the normed linear spaces by many authors. In $2005$, B.K.Lahiri and Pratulananda Das \cite{17} extended the concept of $I$ and $I^*$-convergence in a topological space and they observed that the basic properties are preserved also in a topological space. Later many works on $I$-convergence were done in topological spaces\cite{2,3,3.1,3.3,3.4,3.5}.\\
Ordinary convergence always implies statistical convergence and when $I$ is an admissible ideal, $I^*$-convergence implies  $I$-convergence. But converse may not be true. Several examples were given for real sequences in\cite{19} to disprove the converse part. Moreover in \cite{6,17,19} it is seen that a statistical convergent sequence and $I$ and $I^*$- convergent sequence  even need not be bounded.\\
In the year 2010, M. Macaj and M. Sleziak \cite{23} introduced the idea of  $I^K$-convergence in a topological space where $I$ and $K$ are ideal of an arbitrary set $S$ and shown that this type of convergence is a common generalization for all types of $I$ and $I^*$-convergence in some restriction. They also gave the AP$(I,K)$ condition which is generalization of AP condition given in \cite{19}.\\
The concept of $I$-Cauchy condition was studied first by K. Dems \cite{4} in 2004 and then further investigation on $I^*$-Cauchy was studied in \cite{29} by A. Nabiev et al in 2007. In the year 2014, P. Das et al \cite{7} studied on $I^K$-Cauchy functions. \\
The idea of probabilistic metric space was first introduce by Menger \cite{24} as a generalization of ordinary metric space. The notion of distance has a probabilistic nature which has led to a remarkable development of the probabilistic metric space(in short PM Space). PM Spaces have nice topological properties and several topologies can be defined on this space and the topology that was found to be most useful is the strong topology. The theory was brought to its present form by Schweizer and Sklar \cite{36}, Tardiff \cite{39}. In the year 2009 , the concept of statistical convergence and then strong ideal convergence in probabilistic metric space was studied in \cite{37,38} by C.Sencimen et al. In the year 2012, M. Mursaleen et al studied ideal convergence in probabilistic normed spaces.\cite{28}\\
The recent works of generalization of convergence via ideals in probabilistic metric space have been developed by many authors. It seems therefore reasonable to think if we extend the same in the same space using double ideals and in that case we intend to investigate how far several the basic properties (such as results on limit points, Cauchy sequences etc.) are affected. In our paper we study the idea of strong-$I^K$-convergence of functions in a probabilistic metric space which also generalizes the strong-$I^*$-convergence in \cite{38}. Since the convergence in PM space is very significant to probabilistic analysis, we realize that the idea of convergence via double ideal in a PM space would give more general frame for analysis of PM space. 
\section{Preliminaries}
First we focus on some basic idea related to theory of PM spaces which are already studied in depth in the book by Schweizer and Sklar \cite{35}.
\begin{definition}
A non-decreasing function $F:\mathbb{R}\rightarrow [0,1]$ with $F(-\infty)=0$ and $F(\infty)=1$ is called a distribution function.
\end{definition}
The well-known notation $\bigtriangleup$ denotes the set of all left continuous distribution function.
\begin{definition}
A non-decreasing function $F$ defined on $[0,\infty]$ which satisfies $F(0)=0$, $F(\infty)=1$ and left continuous on $(0,\infty)$ is called a distance distribution function (d.d.f). 
\end{definition}
The set of all distance distribution functions denoted by $\triangle^+$. In $1942$, K. Menger, who had played a main role in the development of the theory of metric spaces, proposed a probabilistic generalization of this theory. He proposed replacing the number $d(a,b)$ by a real function $F_{ab}$ whose value $F_{ab}(s)$, for any real no $s$, is interpreted as the probability that the distance between $a$ and $b$ is less than $s$. Then $0\leq F_{ab}(s)\leq 1$, for every real value $s$ and clearly $F_{ab}(s)\leq F_{ab}(t)$ whenever $s\leq t$. Hence $F_{ab}$ is a probabilistic distribution function.
\begin{definition}
For any $x\in(-\infty,\infty)$ the unit step function at $x$ is denoted by $\epsilon_x$ and is defined to be a function in the family of distribution functions given by 
\[\epsilon_x(s)=\left\{\begin {array}{ll}
        0 & \mbox{if $s\leq x$} \\
		1 & \mbox{if $s> x$}
		\end{array}
		\right. \]
\end{definition}
So in particular 
\[\epsilon_0(s)=\left\{\begin {array}{ll}
        0 & \mbox{if $s\leq 0$} \\
		1 & \mbox{if $s> 0$}
		\end{array}
		\right. \]
\begin{definition}
The distance between $F$ and $G$ in $\bigtriangleup$ is defined by\\ $d_L(F,G)=inf\{t\in(0,1]:~both~(F,G;t)~and~(G,F;t)~hold\}$ where for $t\in(0,1]$, the condition $(F,G;t)$ holds if $F(s-t)-t\leq G(s)\leq F(s+t)+t$ for every $s\in(-\frac{1}{t},\frac{1}{t})$.
\end{definition}

\begin{theorem}
 $(\triangle,d_L)$ is a metric space which is compact and hence complete.
\end{theorem}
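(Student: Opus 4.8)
The plan is to verify the metric axioms for $d_L$ directly, then obtain compactness by a Helly-type selection argument, after which completeness is automatic since every compact metric space is complete. \emph{The metric axioms.} Taking $t=1$ in the definition, the condition $(F,G;1)$ reads $F(s-1)-1\le G(s)\le F(s+1)+1$ for $s\in(-1,1)$, which holds trivially because distribution functions are $[0,1]$-valued; hence the set defining $d_L(F,G)$ is a nonempty subset of $(0,1]$ and $0\le d_L(F,G)\le 1$. That $d_L(F,F)=0$ is immediate from the monotonicity of $F$, and symmetry is built into the definition. If $d_L(F,G)=0$, choose $t_n\downarrow 0$ for which $(F,G;t_n)$ and $(G,F;t_n)$ hold; for fixed $s$ the window $(-1/t_n,1/t_n)$ eventually contains $s$, so $F(s-t_n)-t_n\le G(s)$ and $G(s-t_n)-t_n\le F(s)$, and letting $n\to\infty$ and using left continuity yields $F(s)\le G(s)\le F(s)$. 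For the triangle inequality I would first note that the set defining $d_L$ is an up-set in $(0,1]$: if $(F,G;t)$ holds and $t<t'\le1$, then for $s\in(-1/t',1/t')\subset(-1/t,1/t)$ monotonicity gives $G(s)\le F(s+t)+t\le F(s+t')+t'$ and likewise on the left. Hence, if $d_L(F,G)=a$, $d_L(G,H)=b$ and $a+b<1$, pick $t_1>a$, $t_2>b$ with $t_1+t_2<1$, so that $(F,G;t_1)$, $(G,H;t_2)$ and their reverses all hold; for $s$ in the smaller window $(-1/(t_1+t_2),1/(t_1+t_2))$ one has $s\in(-1/t_2,1/t_2)$ and, because $1/(t_1+t_2)+t_2\le 1/t_1$ is equivalent to $t_1(t_1+t_2)\le1$ (true here), also $s\pm t_2\in(-1/t_1,1/t_1)$; chaining the two conditions then gives $F(s-t_1-t_2)-(t_1+t_2)\le H(s)\le F(s+t_1+t_2)+(t_1+t_2)$, i.e. $(F,H;t_1+t_2)$, and symmetrically $(H,F;t_1+t_2)$. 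Thus $d_L(F,H)\le t_1+t_2$, and letting $t_1\downarrow a$, $t_2\downarrow b$ gives $d_L(F,H)\le a+b$; the case $a+b\ge1$ is trivial.

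\emph{Compactness.} Given a sequence $(F_n)$ in $\triangle$, Helly's first selection theorem furnishes a subsequence $(F_{n_k})$ and a nondecreasing $[0,1]$-valued function $G_0$ such that $F_{n_k}(x)\to G_0(x)$ at every continuity point $x$ of $G_0$, i.e. off an at most countable set. Passing to the left-continuous regularization of $G_0$ and assigning it the values $0$ at $-\infty$ and $1$ at $+\infty$ (so that any mass escaping to $\pm\infty$ is recorded as a jump at the endpoint) produces an element $G\in\triangle$. The main point is then to show $d_L(F_{n_k},G)\to0$: for fixed $t\in(0,1]$ the conditions $(G,F_{n_k};t)$ and $(F_{n_k},G;t)$ involve only the bounded interval $[-1/t,1/t]$, so choosing finitely many continuity points of $G$ that are $t$-dense in it and using pointwise convergence there together with the monotonicity of $G$ and of each $F_{n_k}$ forces both conditions to hold for all large $k$; hence $d_L(F_{n_k},G)\le t$ eventually, and since $t$ was arbitrary $d_L(F_{n_k},G)\to0$. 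This makes $(\triangle,d_L)$ sequentially compact, hence compact, and therefore complete. (Alternatively, one may identify $\triangle$ with the space of probability measures on the compact interval $\overline{\mathbb{R}}=[-\infty,\infty]$, observe that $d_L$ metrizes the weak topology, and quote weak-$*$ compactness.)

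\emph{Main obstacle.} The algebra of the triangle inequality is routine once the up-set property is in hand, but the window bookkeeping with the intervals $(-1/t,1/t)$ must be handled with care. The genuine difficulty lies in the compactness step: one must treat the possible escape of mass to $\pm\infty$ under Helly selection and verify that $d_L$-convergence of the regularized limit really does follow from pointwise convergence on a co-countable set — equivalently, that $d_L$ induces the (extended) weak topology on $\triangle$.
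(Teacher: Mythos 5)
The paper offers no proof of this theorem: it is quoted as background from Schweizer and Sklar's book, so there is nothing internal to compare against. Your argument is correct and is essentially the standard one (due in this form to Sibley and reproduced in Schweizer--Sklar): direct verification of the metric axioms for the modified L\'evy metric, Helly selection plus the fact that $d_L$ metrizes weak convergence for compactness, and completeness as a corollary. The details you supply check out --- in particular the use of the \emph{reversed} condition $(G,F;t_n)$ together with left continuity to get $G\le F$ in the identity of indiscernibles (the naive bound $G(s)\le F(s+t_n)+t_n$ only gives $G(s)\le F(s^+)$), the up-set property and the window inclusion $1/(t_1+t_2)+t_2\le 1/t_1$ in the triangle inequality, and, most importantly, the observation that compactness only holds because $\triangle$ is understood as distribution functions on $[-\infty,\infty]$ so that mass escaping under Helly selection can be absorbed as jumps at $\pm\infty$; without that convention (e.g.\ for the sequence $\epsilon_n$) compactness fails. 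The one place that remains a sketch rather than a proof is the final step that pointwise convergence at a $t$-dense set of continuity points in $[-1/t,1/t]$ forces $(F_{n_k},G;t)$ and its reverse for large $k$; this is the standard L\'evy-metric bookkeeping and goes through, but it is the step you would need to write out in full.
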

\begin{definition}
A sequence $\{F_n\}_{n\in \mathbb{N}}$ of d.d.f's is said to converge weakly to a d.d.f $F$ and if $\{F_n(s)\}_{n\in \mathbb{N}}$ converges to $F(s)$ at each continuity point $s$ of $F$ and then we write $F_n\xrightarrow{w} F$.
\end{definition}
In order to present the definition of a probabilistic metric space, we need the notion of triangle function introduced by Serstnev in \cite{38.1}.
\begin{definition}
A triangle function $\tau:\triangle^+\times\triangle^+\rightarrow\triangle^+$ is a binary operation on $\triangle^+$ which is non-decreasing, associative, commutative  in each of its variables and has $\epsilon_0$ as the identity.
\end{definition}
\begin{definition}
A probabilistic metric space (briefly PM space) is a triplet $(P,\mathcal{F},\tau)$ where $P$ is a non-empty set, $\mathcal{F}:P\times P\rightarrow \triangle^+$ is a function, $\tau$ is a triangle function satisfying the following condition for all $a,b,c\in P$\\
(i) $\mathcal{F}(a,a)=\epsilon_0$\\
(ii)$\mathcal{F}(a,b)\neq \epsilon_0$ if $a\neq b$\\
(iii)$\mathcal{F}(a,b)=\mathcal{F}(b,a)$\\
(iv)$\mathcal{F}(a,c)\geq\tau(\mathcal{F}(a,b),\mathcal{F}(b,c))$
\end{definition}
Henceforth we shall denote $\mathcal{F}(a,b)$ by $F_{ab}$ and its value at $s$ by $F_{ab}(s)$.
\begin{theorem}\label{1.0}
Let $G\in \triangle^+$ be given then for any $t>0$, $G(t)>1-t$ if and only if $d_L(G,\epsilon_0)<t$
\end{theorem}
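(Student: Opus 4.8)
The plan is to unwind the definition of $d_L(G,\epsilon_0)$ by substituting $F=\epsilon_0$ into the two conditions $(\epsilon_0,G;h)$ and $(G,\epsilon_0;h)$ and simplifying, using the two-valued shape of the unit step function together with the facts that $0\le G\le 1$ and $G(s)=0$ for $s\le 0$ (true since $G\in\triangle^+$). A routine case check over $s\in(-1/h,1/h)$ then shows that the right inequality of $(\epsilon_0,G;h)$ and the left inequality of $(G,\epsilon_0;h)$ hold automatically, that the left inequality of $(\epsilon_0,G;h)$ is equivalent to ``$G(s)\ge 1-h$ for all $s\in(h,1/h)$'', and that the right inequality of $(G,\epsilon_0;h)$ is equivalent to ``$G(s)\ge 1-h$ for all $s\in(h,h+1/h)$''. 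Since $(h,1/h)\subseteq(h,h+1/h)$, I would record the clean reformulation: for $h\in(0,1]$, both $(\epsilon_0,G;h)$ and $(G,\epsilon_0;h)$ hold if and only if $G(s)\ge 1-h$ for every $s\in(h,h+1/h)$. In particular $h=1$ always works (the condition then reads $G(s)\ge 0$), so the set defining $d_L(G,\epsilon_0)$ is nonempty and $d_L(G,\epsilon_0)\le 1$.

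For the ``if'' direction I would assume $d_L(G,\epsilon_0)<t$. If $t>1$ then $1-t<0\le G(t)$, so $G(t)>1-t$ and there is nothing more to do; hence assume $t\le 1$. By the meaning of the infimum there is $h\in(0,1]$ with $h<t$ for which $G(s)\ge 1-h$ holds on $(h,h+1/h)$. Since $h\le 1$ implies $h+1/h\ge 2>1\ge t$, and also $t>h$, we get $t\in(h,h+1/h)$, whence $G(t)\ge 1-h>1-t$.

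For the ``only if'' direction I would assume $G(t)>1-t$. If $t>1$ there is nothing to prove since $d_L(G,\epsilon_0)\le 1<t$, so assume $t\le 1$. It suffices to produce $h\in(0,t)$ with $G(h)\ge 1-h$, for then monotonicity gives $G(s)\ge G(h)\ge 1-h$ for every $s>h$, in particular on $(h,h+1/h)$; as $h<t\le 1$, this $h$ lies in the defining set, so $d_L(G,\epsilon_0)\le h<t$. To obtain such an $h$ I would argue by contradiction: were $G(h)<1-h$ for every $h\in(0,t)$, then letting $h\uparrow t$ and invoking the left continuity of $G$ at $t>0$ would give $G(t)=\lim_{h\to t^{-}}G(h)\le 1-t$, contradicting $G(t)>1-t$.

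I expect the only real work to be the bookkeeping in the first step: for each of the four one-sided inequalities one must decide, over the open range $s\in(-1/h,1/h)$, which constraints are vacuous and which survive, and then notice the nesting $(h,1/h)\subseteq(h,h+1/h)$ and the elementary bound $h+1/h\ge 2$ valid for all $h\in(0,1]$ --- it is precisely this bound that lets a parameter $t\le 1$ sit inside the surviving interval in the ``if'' direction. The sole analytic ingredient is left continuity of distance distribution functions, needed only for the limiting step in the converse; the rest is monotonicity and arithmetic.
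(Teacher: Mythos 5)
Your proof is correct. Note first that the paper itself offers no proof of this statement: it is quoted in the preliminaries as a known fact about the modified L\'evy metric (it is essentially Lemma 4.3.4 of Schweizer--Sklar, also used by Sencimen and Pehlivan), so there is no in-paper argument to compare yours against. Your from-scratch verification is sound: the case analysis correctly shows that, with $F=\epsilon_0$ and the standard convention $G(s)=0$ for $s\le 0$, two of the four one-sided inequalities are vacuous and the surviving ones reduce to ``$G(s)\ge 1-h$ on $(h,h+1/h)$''; the bound $h+1/h\ge 2$ correctly places $t\le 1$ inside that interval for the forward direction; and the converse correctly isolates left continuity of $G$ at $t>0$ as the one analytic ingredient, with monotonicity doing the rest. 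The only cosmetic wrinkle is that the paper defines $d_L$ on $\triangle$ (distributions on $\mathbb{R}$) while the theorem concerns $G\in\triangle^+$ (defined on $[0,\infty]$), so the extension of $G$ by zero on the negative axis that you invoke should be stated explicitly as a convention; with that said, the argument is complete.
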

\begin{definition}
Let $(P,\mathcal{F},\tau)$ be a PM space. For $t>0$ and $a\in P$, the strong-$t$-neighborhood of $a\in P$ is defined by the set $\mathcal{N}_a(t)=\{b\in P:F_{ab}(t)>1-t\}$.
\end{definition}
The collection $\aleph_a=\{\mathcal{N}_a(t):t>0\}$ is called strong neighborhood system at $a$ and the union $\aleph=\cup_{a\in P}\aleph_a$ is said to be strong neighborhood system of $S$ and the strong topology is introduced by a strong neighborhood system.
Applying \ref{1.0} we can write strong-$t$-neighborhood as $\mathcal{N}_a(t)=\{b\in P:d_L(F_{ab},\epsilon_0)<t\}$.
\begin{theorem}
Let $(P,\mathcal{F},\tau)$be a PM space. If $\tau$ is continuous, then the strong neighborhood system $\aleph$ satisfies $(i)$ and $(ii)$.\\
$(i)$ If $V$ is a strong neighborhood of $p\in P$ and $q\in V$ , then there is a strong neighborhood $W$ of $q$ such that $W\subseteq V$.\\
$(ii)$ If $p\neq q$, then there is a $V \in \mathcal{N}_p$ and a $W$ in $\mathcal{N}_q$ such that $V\cap W =\phi$ and thus the strong neighborhood system $\mathcal{N}$ determines a Hausdorff topology for $P$.
\end{theorem}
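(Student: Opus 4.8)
The plan is to derive both parts from the single PM-space inequality $\mathcal{F}(a,c)\geq\tau(\mathcal{F}(a,b),\mathcal{F}(b,c))$ (read through $F_{ac}(s)$), from Theorem \ref{1.0}, and from the continuity of $\tau$ for the metric $d_L$. First I would isolate two elementary facts. (a) If $G,H\in\triangle^+$ satisfy $G\geq H$ pointwise, then $d_L(G,\epsilon_0)\leq d_L(H,\epsilon_0)$: indeed, for each $t>0$ Theorem \ref{1.0} gives $d_L(H,\epsilon_0)<t\Rightarrow H(t)>1-t\Rightarrow G(t)>1-t\Rightarrow d_L(G,\epsilon_0)<t$. (b) $\tau(\epsilon_0,G)=\tau(G,\epsilon_0)=G$ for every $G\in\triangle^+$, since $\epsilon_0$ is the identity of $\tau$. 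Throughout I use that $d_L$ is a genuine metric on $\triangle^+$, so it satisfies the triangle inequality and separates points.

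\emph{Part (i).} A strong neighborhood of $p$ is a set $\mathcal{N}_p(t)$, so take $V=\mathcal{N}_p(t)$ and let $q\in V$, i.e.\ $d_L(F_{pq},\epsilon_0)<t$; set $\delta=t-d_L(F_{pq},\epsilon_0)>0$. Since $\tau$ is continuous and, by (b), $\tau(F_{pq},\epsilon_0)=F_{pq}$, there is $s\in(0,\delta]$ with the property that $d_L(G,\epsilon_0)<s$ forces $d_L(\tau(F_{pq},G),F_{pq})<\delta$. I claim $W:=\mathcal{N}_q(s)\subseteq V$. If $r\in\mathcal{N}_q(s)$ then $d_L(F_{qr},\epsilon_0)<s$, and combining the PM-space inequality $F_{pr}\geq\tau(F_{pq},F_{qr})$ with (a) and then the triangle inequality for $d_L$,
\[
d_L(F_{pr},\epsilon_0)\leq d_L(\tau(F_{pq},F_{qr}),\epsilon_0)\leq d_L(\tau(F_{pq},F_{qr}),F_{pq})+d_L(F_{pq},\epsilon_0)<\delta+d_L(F_{pq},\epsilon_0)=t,
\]
so $r\in V$, as needed.

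\emph{Part (ii).} Let $p\neq q$. By axiom (ii) of a PM space $F_{pq}\neq\epsilon_0$, and since $d_L$ separates points, $\eta:=d_L(F_{pq},\epsilon_0)>0$. Using continuity of $\tau$ at $(\epsilon_0,\epsilon_0)$ together with $\tau(\epsilon_0,\epsilon_0)=\epsilon_0$, pick $\delta>0$ such that $d_L(G,\epsilon_0)<\delta$ and $d_L(H,\epsilon_0)<\delta$ imply $d_L(\tau(G,H),\epsilon_0)<\eta$. Put $V=\mathcal{N}_p(\delta)\in\aleph_p$ and $W=\mathcal{N}_q(\delta)\in\aleph_q$. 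If some $r$ lay in $V\cap W$, then $d_L(F_{pr},\epsilon_0)<\delta$ and $d_L(F_{rq},\epsilon_0)<\delta$, so by $F_{pq}\geq\tau(F_{pr},F_{rq})$, fact (a), and the choice of $\delta$ we would get $d_L(F_{pq},\epsilon_0)\leq d_L(\tau(F_{pr},F_{rq}),\epsilon_0)<\eta=d_L(F_{pq},\epsilon_0)$, which is impossible. Hence $V\cap W=\phi$; since distinct points are thus separated by strong neighborhoods and, by part (i), the strong neighborhoods form a base, the strong topology on $P$ is Hausdorff.

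I expect the only delicate point to be the bookkeeping in fact (a) — translating the pointwise order on distance distribution functions into a comparison of $d_L$-distances to $\epsilon_0$ via Theorem \ref{1.0}, and applying it in the correct direction — together with invoking continuity of $\tau$ at exactly the two points $(F_{pq},\epsilon_0)$ and $(\epsilon_0,\epsilon_0)$. Everything else reduces to the metric axioms for $d_L$ and the defining triangle inequality of the PM space, so no genuinely new idea should be required.
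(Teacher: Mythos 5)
This theorem is one of the quoted preliminaries (it is the standard result from Schweizer and Sklar's book), and the paper supplies no proof of it, so there is no in-paper argument to compare yours against; I can only assess your proof on its own terms, and it is correct. Your two auxiliary facts are sound: (a) follows from the two directions of Theorem \ref{1.0} (if $G\geq H$ pointwise, then every $t$ with $d_L(H,\epsilon_0)<t$ gives $H(t)>1-t$, hence $G(t)>1-t$, hence $d_L(G,\epsilon_0)<t$, and taking the infimum over such $t$ yields $d_L(G,\epsilon_0)\leq d_L(H,\epsilon_0)$), and (b) is just the identity property of $\epsilon_0$ for $\tau$. In part (i) you invoke continuity of $\tau$ only in its second argument at $(F_{pq},\epsilon_0)$, which joint continuity certainly supplies, and the chain $d_L(F_{pr},\epsilon_0)\leq d_L(\tau(F_{pq},F_{qr}),\epsilon_0)\leq d_L(\tau(F_{pq},F_{qr}),F_{pq})+d_L(F_{pq},\epsilon_0)<t$ is valid because $d_L$ is a genuine metric on $\triangle^+$ (Theorem $2.1$ of the paper) and the PM-space axiom gives $F_{pr}\geq\tau(F_{pq},F_{qr})$. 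In part (ii) your argument is essentially a pointwise instance of the vicinity statement, Theorem \ref{2}: that theorem says precisely that for each $t>0$ there is $\eta>0$ with $\mathscr{U}(\eta)\circ\mathscr{U}(\eta)\subseteq\mathscr{U}(t)$, which applied with $t=\eta:=d_L(F_{pq},\epsilon_0)$ yields the same contradiction you derive by hand from continuity of $\tau$ at $(\epsilon_0,\epsilon_0)$; you also correctly use the symmetry axiom to pass from $F_{qr}$ to $F_{rq}$. The only point worth making explicit, which you gesture at but do not spell out, is why (i) suffices to make $\aleph$ a base for a topology (one also needs $p\in\mathcal{N}_p(t)$, which holds since $F_{pp}=\epsilon_0$ and $\epsilon_0(t)=1>1-t$, and the nesting $\mathcal{N}_p(t)\subseteq\mathcal{N}_p(t')$ for $t\leq t'$ to handle intersections); this is routine and does not affect the correctness of your argument.
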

\begin{definition}
Let $(P,\mathcal{F},\tau)$ be PM space. Then for any $t>0$, the subset $\mathscr{U}(t)$ of $P\times P$ given by $\mathscr{U}(t)=\{(a,b):F_{ab}(t)>1-t\}$ is called strong-$t$-vicinity.
\end{definition}
\begin{theorem}\label{2}
Let $(P,\mathcal{F},\tau)$ be PM space and $\tau$ be continuous. Then for any $t>0$, there is an $\eta>0$ such that $\mathscr{U}(\eta)\circ\mathscr{U}(\eta)\subseteq\mathscr{U}(t)$, where $\mathscr{U}(\eta)\circ\mathscr{U}(\eta)=\{(a,c):for~some~ b,(a,b)~and~(b,c)\in \mathscr{U}(\eta)\}$
\end{theorem}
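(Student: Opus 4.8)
The plan is to transport both the hypothesis and the conclusion into the metric $d_L$ by means of Theorem~\ref{1.0}, and then to let the continuity of $\tau$ at the identity element $\epsilon_0$ furnish the required $\eta$.

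First I would record the structural fact that starts everything: since $\epsilon_0$ is the identity of the triangle function, $\tau(\epsilon_0,\epsilon_0)=\epsilon_0$. Viewing $\tau$ as a map $(\triangle^+,d_L)\times(\triangle^+,d_L)\to(\triangle^+,d_L)$, the standing hypothesis that $\tau$ is continuous --- in particular at $(\epsilon_0,\epsilon_0)$, where it takes the value $\epsilon_0$ --- yields, for the given $t>0$, a number $\eta>0$ (which we may as well take $\le t$, replacing it by $\min\{\eta,t\}$ if necessary) such that
\[ d_L(F,\epsilon_0)<\eta \ \text{ and }\ d_L(G,\epsilon_0)<\eta \ \Longrightarrow\ d_L\big(\tau(F,G),\epsilon_0\big)<t . \]
This $\eta$ will be the one claimed in the statement.

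Next I would take an arbitrary element $(a,c)\in\mathscr{U}(\eta)\circ\mathscr{U}(\eta)$, so that there is some $b\in P$ with $(a,b)\in\mathscr{U}(\eta)$ and $(b,c)\in\mathscr{U}(\eta)$; by the definition of the strong-$\eta$-vicinity this means $F_{ab}(\eta)>1-\eta$ and $F_{bc}(\eta)>1-\eta$, which by Theorem~\ref{1.0} is precisely $d_L(F_{ab},\epsilon_0)<\eta$ and $d_L(F_{bc},\epsilon_0)<\eta$. Feeding these into the displayed implication gives $d_L\big(\tau(F_{ab},F_{bc}),\epsilon_0\big)<t$, and applying Theorem~\ref{1.0} in the reverse direction this reads $\tau(F_{ab},F_{bc})(t)>1-t$. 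Finally, condition~(iv) in the definition of a PM space gives $F_{ac}\ge\tau(F_{ab},F_{bc})$ pointwise, hence in particular $F_{ac}(t)\ge\tau(F_{ab},F_{bc})(t)>1-t$, i.e.\ $(a,c)\in\mathscr{U}(t)$. Since $(a,c)$ was arbitrary, $\mathscr{U}(\eta)\circ\mathscr{U}(\eta)\subseteq\mathscr{U}(t)$, which is the assertion.

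The argument is short, and I expect the only delicate point to be the opening move: one has to read ``$\tau$ is continuous'' as continuity for the $d_L$-metric (the natural reading here, since $(\triangle,d_L)$ is the ambient metric space and $\triangle^+\subseteq\triangle$), and one has to notice that it is exactly the identity property of $\epsilon_0$ that pins down $\tau(\epsilon_0,\epsilon_0)=\epsilon_0$, so that the metric formulation of continuity can be applied at $(\epsilon_0,\epsilon_0)$ with target the $d_L$-ball of radius $t$ about $\epsilon_0$. After that the proof is a purely mechanical back-and-forth through Theorem~\ref{1.0}, together with one use of condition~(iv); no genuine estimation is involved.
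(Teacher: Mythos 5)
Your proof is correct. The paper itself states this theorem without proof in the Preliminaries (it is imported from the standard theory of PM spaces in Schweizer--Sklar), so there is no in-paper argument to compare against; your route --- using $\tau(\epsilon_0,\epsilon_0)=\epsilon_0$, continuity of $\tau$ at $(\epsilon_0,\epsilon_0)$ in the $d_L$-metric to produce $\eta$, Theorem~\ref{1.0} to translate between $F(\eta)>1-\eta$ and $d_L(F,\epsilon_0)<\eta$, and condition~(iv) with the pointwise order on $\triangle^+$ to conclude $F_{ac}(t)>1-t$ --- is exactly the standard one, and every step checks out.
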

\begin{note}
Under the hypothesis of theorem \ref{2} we can say that for any $t>0$ there is an $\eta>0$ such that $F_{ac}(t)>1-t$ whenever $F_{ab}(\eta)>1-\eta$ and $F_{bc}(\eta)>1-\eta$ i.e. from the theorem \ref{1.0} we can say $d_L(F_{ac},\epsilon_0)<t$ whenever $d_L(F_{ab},\epsilon_0)<\eta$ and $d_L(F_{bc},\epsilon_0)<\eta$.
\end{note}
\begin{definition}
Let $S$ be a non-void set then a family of sets $I\subset 2^S$ is said to be an ideal if 
\item (i) $A,B\in I \Rightarrow A\cup B\in I$
\item(ii) $A\in I, B\subset A \Rightarrow B\in I$
\end{definition}
$I$ is called nontrivial ideal if $S\notin I$ and $ I\neq \{\phi\} $. In view of condition (ii) $\phi\in I $. If $I \subsetneqq 2^S$ we say that $I$ is proper ideal on $S$. Several examples of non-trivial ideals are seen in \cite{19}.
A nontrivial ideal $I$ is called admissible if it contains all the singleton of $S$. A nontrivial ideal $I$ is called non-admissible if it is not admissible.
The ideal of all finite subsets of $S$ which we shall denote by Fin$(S)$. If $S=\mathbb{N}$, set of all natural number, then we write Fin instead of Fin$(\mathbb{N})$ for short.
\begin{note}
The dual notion to the ideal is the notion of the filter i.e. a filter on $S$ is non-void system of subsets of $S$, which is closed under finite intersection and super sets. If $I$ is a non-trivial ideal on $S$ then $F=F(I)=\{A\subset S:S\setminus A \in I \}$ is clearly a filter on $S$ and conversely. $F(I)$ is called associated filter with respect to ideal $I$.
\end{note}
\section{Strong-$I^K$-Convergence of Functions}
Throughout the paper $P$ stands for a probabilistic metric space(briefly PM space) and we always assume that in a PM space $P$, the triangle function $\tau$ is continuous and $P$ endowed with strong topology. $I$, $K$ are non-trivial ideals of a non empty set $S$ unless otherwise stated.
First we will give the definition of Fin-convergence of a function in PM space
\begin{definition}
Let $(P,\mathcal{F},\tau)$ be a PM space. A function $f:S\rightarrow P$ is said to be Fin-convergent to $p\in P$ if
$f^{-1}(\mathcal{N}_p(t))=\{s\in S:f(s)\in \mathcal{N}_p(t)\}$ is a finite set for every strong-t-neighborhood $\mathcal{N}_p(t)$ of $p$.
\end{definition}
We use the notation Fin$(S)$-$f= p$.
Now we give the definition of strong-$I$-convergence using function instead of sequence in probabilistic metric space.
\begin{definition}(cf.\cite{38}) Let $I$ be an ideal on a non-empty set $S$ and $(P,\mathcal{F},\tau)$ be a PM space. A function $f:S\rightarrow P$ is said to be strong-$I$-convergent to $p\in P$ if
$$f^{-1}(\mathcal{N}_p(t))=\{s\in S:f(s)\in \mathcal{N}_p(t)\}\in F(I)$$ 
holds for every strong-t-neighborhood $\mathcal{N}_p(t)$ of $p$.
\end{definition}
i.e. $f^{-1}(P \setminus \mathcal{N}_p(t))=\{s\in S:f(s)\notin \mathcal{N}_p(t)\}\in I$ for every strong-$t$-neighborhood.
We use the notation $f\xrightarrow{str-I} p$. If $S=\mathbb{N}$ we obtain the usual definition of strong-$I$-convergence of sequence in PM space. In this case the notation $p_n\xrightarrow{str-I} p$ is used for a real sequence $\{p_n\}$. Now we consider some primary result regarding strong-$I$-convergence for future reference.
\begin{note}\label{20}
(i) If $I$ is an ideal on an arbitrary set  $S$ and let $P$ be PM space then it can be easily verified that Strong-$I$-limit of a function is unique.\\
(ii) If $I_1,I_2$ be ideals on an arbitrary set $S$ such that $I_1\subseteq I_2$ then for each function $f:S\rightarrow P$, we get  $f\xrightarrow{str-I_1} p$ implies $f\xrightarrow{str-I_2} p$.\\
(iii) Again if $P,Q$ are two PM spaces and $g:P\rightarrow Q$ is a continuous mapping and $f:S\rightarrow P$ is strong-$I$-convergent to $p$ then $g\circ f$ is strong-$I$-convergent to $g(x)$.
\end{note}
Since we are working with function, we modify the definition of  strong-$I^*$-convergence in PM space.
\begin{definition}
Let $I$ be an ideal on an arbitrary set $S$ and let $f:S\rightarrow P$ be a function to a PM space $P$. The function $f$ is called strong-$I^*$-convergent to $p \in P$ if there exists a set $M\in F(I)$ such that the function $g:S\rightarrow P$ defined by
\[g(s)=\left\{\begin {array}{ll}
        f(s) & \mbox{if $s\in M$} \\
		p & \mbox{if $s\notin M$}
		\end{array}
		\right. \] \\
is Fin$(S)$-convergent to $p$. 
\end{definition} 
If $f$ is strong-$I^*$-convergent to $p$, then we write $f\xrightarrow{str-I^*} p$. The usual notion of strong-$I^*$-convergence of sequence is a special case for $S=\mathbb{N}$. We write $p_n\xrightarrow{str-I^*} p$ is used for a real sequence $\{p_n\}$.
In the definition of strong-$I^K$-convergence we simply replace the Fin by an ideal on the set $S$. Strong-$I^K$-convergence as a common generalization of all types of strong-$I^*$-convergence of sequences and functions from $S$ to $P$. Here we shall work with functions instead of sequences. One of the reasons is that using functions sometimes helps to simplify notation.
\begin{definition}
Let $K$ and $I$ be an ideal on an arbitrary set $S$, $P$ be a PM space and let $p$ be an element of $P$. The function $f:S\rightarrow P$  is called strong-$I^K$-convergent to $p\in P$ if there exists a set $M\in F(I)$ such that the function $g:S\rightarrow P$ given by
\[g(s)=\left\{\begin {array}{ll}
        f(s) & \mbox{if $s\in M$} \\
		p & \mbox{if $s\notin M$}
		\end{array}
		\right. \] \\
is strong-$K$-convergent to $p$. 
\end{definition} 
\begin{remark}
We can reformulate the definition of strong-$I^K$-convergence in the following way: if there exists an $M\in F(I)$ such that the function $f|_M$ is strong-$K|_M$-convergent to $p$ where $K|_M=\{A\cap M:A\in K\}$.
\end{remark}
If $f$ is strong-$I^K$-convergent to $p$, then we write $f\xrightarrow{str-I^K} p$.
As usual, notion of $I^K$-convergence of sequence is a special case for $S=\mathbb{N}$.

\begin{lemma}\label{0}
If $I$ and $K$ are ideals on an arbitrary set $S$ and $f:S\rightarrow P$ is a function such that $f\xrightarrow{str-K} p$, then $f\xrightarrow{str-I^K} p$.
\end{lemma}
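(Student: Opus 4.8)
The plan is to verify the defining condition of strong-$I^K$-convergence directly by producing the required witness set $M\in F(I)$. Recall that, by the definition of strong-$I^K$-convergence, it suffices to find some $M\in F(I)$ for which the auxiliary function $g$ (equal to $f$ on $M$ and to $p$ off $M$) is strong-$K$-convergent to $p$. The natural candidate is the largest possible set, namely $M=S$ itself.

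First I would check that $M=S$ indeed belongs to $F(I)$. Since $F(I)=\{A\subset S: S\setminus A\in I\}$, this amounts to $S\setminus S=\phi\in I$, which holds because every ideal is hereditary (condition (ii) in the definition of an ideal) and $I$ is non-void, so $\phi\in I$. Hence $S\in F(I)$.

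With this choice of $M$, the function $g$ appearing in the definition of strong-$I^K$-convergence satisfies $g(s)=f(s)$ for every $s\in S$, because the clause ``$p$ if $s\notin M$'' is vacuous when $M=S$. Thus $g\equiv f$, and the hypothesis $f\xrightarrow{str-K} p$ is precisely the statement that $g$ is strong-$K$-convergent to $p$. Therefore the pair $(M,g)=(S,f)$ witnesses that $f\xrightarrow{str-I^K} p$. (Equivalently, using the reformulation in the Remark: take $M=S$, so that $f|_M=f$ and $K|_M=K$, and strong-$K|_M$-convergence of $f|_M$ to $p$ is the hypothesis.)

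There is no genuine obstacle here; the only points requiring a moment's care are the routine observations that $\phi\in I$ (so that $S\in F(I)$ is legitimate) and that the piecewise function $g$ collapses to $f$ when $M=S$. The lemma is essentially the analogue, in this setting, of the elementary fact that strong-$K$-convergence is the special case of strong-$I^K$-convergence with the trivial choice of the $F(I)$-set.
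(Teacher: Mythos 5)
Your proof is correct and follows exactly the argument the paper intends: the paper delegates to Lemma 3.5 of Macaj--Sleziak, whose proof is precisely the choice $M=S$ (the same device the paper spells out explicitly for the Cauchy analogue, Lemma \ref{11}). The verification that $S\in F(I)$ and that $g$ collapses to $f$ is all that is needed, so nothing is missing.
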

\begin{proof}
The proof is parallel to proof of lemma $3.5$ \cite{23}.
\end{proof}
\begin{proposition}
Let $I,J,K$ and $L$ be ideals on a set $S$ such that $I\subseteq J$ and $K\subseteq L$ and let $P$ be a PM space. Then for any function $f:S\rightarrow P$, we have\\
(i)$f\xrightarrow{str-I^K} p ~~ \Rightarrow ~~ f\xrightarrow{str-J^K} p$ and\\
(ii)$f\xrightarrow{str-I^K} p ~~ \Rightarrow ~~ f\xrightarrow{str-I^L} p$
\end{proposition}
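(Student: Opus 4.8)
The plan is to exploit the monotonicity of the passage from an ideal to its associated filter, together with the inclusion-monotonicity of strong-$K$-convergence already recorded in Note~\ref{20}(ii). Both parts follow the same template: unwind the definition of strong-$I^K$-convergence to produce a witnessing set $M\in F(I)$ and an auxiliary function $g$, then observe that the \emph{same} $M$ (equivalently, the same restricted function $f|_M$) still witnesses the target convergence, so that no fresh verification is required.

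For (i), suppose $f\xrightarrow{str-I^K} p$. By definition there is a set $M\in F(I)$ such that the function $g:S\rightarrow P$ agreeing with $f$ on $M$ and equal to $p$ on $S\setminus M$ is strong-$K$-convergent to $p$. First I would note that $I\subseteq J$ forces $F(I)\subseteq F(J)$: indeed, if $M\in F(I)$ then $S\setminus M\in I\subseteq J$, hence $M\in F(J)$. Thus the very same $M$ lies in $F(J)$ and the very same $g$ is strong-$K$-convergent to $p$, which is precisely the assertion $f\xrightarrow{str-J^K} p$.

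For (ii), again start from $f\xrightarrow{str-I^K} p$ and take $M\in F(I)$ with the associated $g$ strong-$K$-convergent to $p$. Since $K\subseteq L$, Note~\ref{20}(ii) applied to the function $g$ gives that $g$ is strong-$L$-convergent to $p$, so the same $M\in F(I)$ witnesses $f\xrightarrow{str-I^L} p$. Alternatively, via the reformulation in the Remark following the definition of strong-$I^K$-convergence, one has $f|_M$ strong-$K|_M$-convergent to $p$; since $K\subseteq L$ implies $K|_M\subseteq L|_M$, Note~\ref{20}(ii) on the set $M$ yields $f|_M$ strong-$L|_M$-convergent to $p$, which is $f\xrightarrow{str-I^L} p$.

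There is no substantive obstacle here; the only point requiring care is that the witnessing set $M$ and the auxiliary function $g$ are left literally unchanged when $I$ is enlarged to $J$ or $K$ to $L$, so the argument reduces to the two elementary monotonicity facts $F(I)\subseteq F(J)$ (for part (i)) and "strong-$K$-convergence $\Rightarrow$ strong-$L$-convergence when $K\subseteq L$" (for part (ii)), the latter being exactly Note~\ref{20}(ii).
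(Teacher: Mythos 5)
Your proposal is correct and follows essentially the same route as the paper: part (i) via the monotonicity $F(I)\subseteq F(J)$ applied to the same witnessing set $M$ and auxiliary function $g$, and part (ii) via Note~\ref{20}(ii) applied to $g$. The only difference is that you spell out the details the paper leaves implicit, which is fine.
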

\begin{proof}
(i) Now as $f\xrightarrow{str-I^K} p$ so there exist a set $M\in F(I)$ such that the function $g:S\rightarrow P$ given by
\[g(s)=\left\{\begin {array}{ll}
        f(s) & \mbox{if $s\in M$} \\
		p & \mbox{if $s\notin M$}
		\end{array}
		\right. \]
is strong-$K$-convergent to $p$. Here $M\in F(I)\subseteq F(J)$ as $I\subseteq J$. So obviously $f\xrightarrow{str-J^K} p$.
(ii) The proof directly follows from the fact that $K\subset L$ and the note $\ref{20}(ii)$.
\end{proof}
\begin{theorem}
Let $I$,$K$ be ideals on an arbitrary set $S$, $P$ be a PM space and let $f$ be a function from $S$ to $P$ then \\
(i) $f\xrightarrow{str-I^K} p ~~~ \Rightarrow ~~~ f\xrightarrow{str-I} p$ if $K\subseteq I$. (ii)  $f\xrightarrow{str-I} p ~~~ \Rightarrow ~~~ f\xrightarrow{str-I^K} p$ if $I\subseteq K$.
\end{theorem}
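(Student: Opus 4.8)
The plan is to handle the two implications separately, since they draw on different pieces of earlier material: (ii) follows almost immediately from Note \ref{20}(ii) and Lemma \ref{0}, whereas (i) needs a short direct argument unwinding the definition of strong-$I^K$-convergence.

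For (ii), I would assume $f\xrightarrow{str-I}p$ together with $I\subseteq K$. First invoke Note \ref{20}(ii), which says that passing to a larger ideal preserves strong-convergence; applied with $I\subseteq K$ this gives $f\xrightarrow{str-K}p$. Then Lemma \ref{0} (strong-$K$-convergence implies strong-$I^K$-convergence, for arbitrary ideals $I,K$) yields $f\xrightarrow{str-I^K}p$, which completes (ii).

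For (i), I would assume $f\xrightarrow{str-I^K}p$ with $K\subseteq I$. By definition there is a set $M\in F(I)$ such that the auxiliary function $g$, equal to $f$ on $M$ and to $p$ off $M$, is strong-$K$-convergent to $p$; equivalently, for each $t>0$ the set $g^{-1}(P\setminus\mathcal{N}_p(t))=\{s\in S:g(s)\notin\mathcal{N}_p(t)\}$ belongs to $K$. The key observation is that since $g(s)=p\in\mathcal{N}_p(t)$ for every $s\notin M$, this set is exactly $\{s\in M:f(s)\notin\mathcal{N}_p(t)\}$, which therefore lies in $K$, hence in $I$ because $K\subseteq I$. I would then decompose
$$f^{-1}(P\setminus\mathcal{N}_p(t))=\{s\in M:f(s)\notin\mathcal{N}_p(t)\}\cup\{s\in S\setminus M:f(s)\notin\mathcal{N}_p(t)\};$$
the first set is in $I$ by the above, and the second is contained in $S\setminus M$, which is in $I$ since $M\in F(I)$, so the union is in $I$ by the ideal axioms. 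As $t>0$ was arbitrary, $f^{-1}(P\setminus\mathcal{N}_p(t))\in I$ for every $t$, i.e. $f\xrightarrow{str-I}p$, proving (i).

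There is no genuine obstacle here; the only step needing a little care is the set-level bookkeeping in (i) — recognizing that the ``bad set'' of $g$ coincides with the bad set of $f$ intersected with $M$, and then splitting the bad set of $f$ along $M$ and $S\setminus M$. Once that is in place, the hypotheses $K\subseteq I$ and $S\setminus M\in I$ finish the argument immediately.
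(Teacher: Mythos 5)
Your proposal is correct and matches the paper's argument essentially step for step: part (ii) is dispatched by Note \ref{20}(ii) followed by Lemma \ref{0}, and part (i) uses the same identification $g^{-1}(P\setminus\mathcal{N}_p(t))=f^{-1}(P\setminus\mathcal{N}_p(t))\cap M\in K\subseteq I$ together with the inclusion of $f^{-1}(P\setminus\mathcal{N}_p(t))$ in $(S\setminus M)\cup g^{-1}(P\setminus\mathcal{N}_p(t))$. No differences worth noting.
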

\begin{proof}
(i) Now  $f\xrightarrow{str-I^K} p$, then by the definition of strong-$I^K$-convergence there exist a set $M\in F(I)$ such that the function $g:S\rightarrow P$ given by
\[g(s)=\left\{\begin {array}{ll}
        f(s) & \mbox{if $s\in M$} \\
		p & \mbox{if $s\notin M$}
		\end{array}
		\right. \]
is strong-$K$-convergent to $p$. i.e. $g^{-1}(P\setminus\mathcal{N}_p(t))=f^{-1}(P \setminus \mathcal{N}_p(t))\cap M\in K\subseteq I$ , for every strong-t-neighborhood of $p$. Consequently, $f^{-1}(P \setminus \mathcal{N}_p(t))\subseteq (S\setminus M)\cup g^{-1}(P\setminus\mathcal{N}_p(t)) \in I$ [as $S\setminus M \in I$]. Thus $f\xrightarrow{str-I} p$.\\
(ii) Proof follows from the note $\ref{20}(ii)$ and lemma \ref{0}.
\end{proof}
Now we introduce an example which is strong-$I^K$-convergence but not strong-$I$-convergence.
\begin{example}
Let $K$ and $I$ be two ideals on $S$ such that $K\not\subset I$ and $I\not\subset K$, but $K\cap I\neq \phi$. Consider a set $B\in K\setminus I$. Let $\mathcal{N}_p(t)$ be strong-$t$-neighborhood of $p\in P$ and $q\in P\setminus \mathcal{N}_p(t)$. Let us define the function $f:S\rightarrow P$ by
\[f(s)=\left\{\begin {array}{ll}
        x & \mbox{if $s\in S\setminus B$} \\
		y & \mbox{if $s\in B$}
		\end{array}
		\right. \]
Clearly, $f^{-1}(P \setminus \mathcal{N}_p(t))=B\in K$ so $f\xrightarrow{str-K} p$ and by the lemma \ref{0}, $f\xrightarrow{str-I^K} p$. But  $f^{-1}(P \setminus \mathcal{N}_p(t))=B\notin I$ i.e. $f\overset{str-I}{\nrightarrow} p$.
\end{example}
\begin{note}
Consider the two sets $M_1=\{2n:n\in \mathbb{N}\}$ and $M_2=\{3n:n\in \mathbb{N}\}$ then $2^{M_1}$ and $2^{M_2}$ are two ideals such that $2^{M_1}\not\subset2^{M_2}$ and  $2^{M_2}\not\subset2^{M_1}$ but  $2^{M_1}\cap 2^{M_2}\neq \phi$.
\end{note}
\subsection{Strong-$I^I$ and $(I\vee K)^K$-Convergence}
In this part we discuss strong-$I^K$-convergence for the case when $I=K$ and for any two ideals  $I, K$ on a non-void set $S$, now $I\vee K=\{A\cup B:A\in I, B\in K\}$ is the smallest ideal containing both $I$ and $K$ on $S$ i.e. $I, K\subseteq I\vee K$. It is clear that if $I\vee K$ is non-trivial and $I$ and $K$ are both proper subset of $I\vee K$ then $I$ and $K$ both are non-trivial. But converse part may or may not be true as shown in the following examples.
\begin{example}
Consider the two sets $N_1=\{4n:n\in \mathbb{N}\}$ and $N_2=\{4n-1:n\in \mathbb{N}\}$ now it is clear that $2^{N_1}$, $2^{N_2}$ and $2^{N_1}\vee 2^{N_2}$ all are non-trivial ideal on $\mathbb{N}$.
\end{example}

\begin{example}
Now let  $N_1$ be set of all odd integers and $N_2$ be set of all even integers. Then it is clear that $I=2^{N_1}$, $K=2^{N_2}$ both are non-trivial ideals on $\mathbb{N}$ but $I\vee K$ is a trivial ideal on $\mathbb{N}$.
\end{example}
If $I\vee K$ is a non-trivial on $S$ then the dual filter is $F(I\vee K)=\{G\cap H:G\in F(I),H\in F(K)\}$.

\begin{theorem}
Let $f:S\rightarrow P$ be a map, $I,K$ be ideal on the arbitrary set $S$ and $P$ be a PM space.\\
(i)$f\xrightarrow{str-I} p ~~~ \Leftrightarrow~~~ f\xrightarrow{str-I^I} p$ and\\
(ii)$f\xrightarrow{str-I^K} p ~~~ \Leftrightarrow ~~~ f\xrightarrow{str-(I\vee K)^K} p$
\end{theorem}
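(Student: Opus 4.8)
The plan is to read off all of part (i) and the forward half of part (ii) from results already established, and then to carry out the one genuinely new argument, the converse half of (ii).

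For (i), I would simply apply the preceding theorem with $K=I$. Its clause (i) then reads $f\xrightarrow{str-I^I}p\Rightarrow f\xrightarrow{str-I}p$ (the hypothesis $K\subseteq I$ being automatic), while its clause (ii), which is just Lemma~\ref{0} with $K$ replaced by $I$, gives $f\xrightarrow{str-I}p\Rightarrow f\xrightarrow{str-I^I}p$. So (i) needs nothing further. Likewise, for the implication $f\xrightarrow{str-I^K}p\Rightarrow f\xrightarrow{str-(I\vee K)^K}p$ in (ii): since $I\subseteq I\vee K$, the part (i) of the preceding proposition, applied with the larger ideal $I\vee K$ in place of $J$, yields this at once.

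The heart of the matter is the reverse implication in (ii), $f\xrightarrow{str-(I\vee K)^K}p\Rightarrow f\xrightarrow{str-I^K}p$. I would proceed as follows. Unwinding the hypothesis, fix $M\in F(I\vee K)$ such that the function $g$ equal to $f$ on $M$ and to $p$ on $S\setminus M$ satisfies $g\xrightarrow{str-K}p$. Since $S\setminus M\in I\vee K$, write $S\setminus M=A\cup B$ with $A\in I$ and $B\in K$ (this uses only the definition of $I\vee K$, so no non-triviality of $I\vee K$ is needed), and set $M'=S\setminus A$. Then $S\setminus M'=A\in I$, i.e. $M'\in F(I)$, and $M\subseteq M'$ with $M'\setminus M=B\setminus A\subseteq B\in K$. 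Let $g'$ be the function equal to $f$ on $M'$ and to $p$ on $S\setminus M'$; the claim is $g'\xrightarrow{str-K}p$. The key point is that $g'$ and $g$ coincide on $M$ (both equal $f$) and on $S\setminus M'$ (both equal $p$), hence they can disagree only on $M'\setminus M\subseteq B$. Therefore, for any strong-$t$-neighborhood $\mathcal{N}_p(t)$ of $p$, and using that $p\in\mathcal{N}_p(t)$ since $F_{pp}=\epsilon_0$,
$$(g')^{-1}\bigl(P\setminus\mathcal{N}_p(t)\bigr)\ \subseteq\ g^{-1}\bigl(P\setminus\mathcal{N}_p(t)\bigr)\cup B\ \in\ K,$$
because $g^{-1}(P\setminus\mathcal{N}_p(t))\in K$ by $g\xrightarrow{str-K}p$, $B\in K$, and $K$ is closed under finite unions. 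Thus $g'\xrightarrow{str-K}p$ with $M'\in F(I)$, which is exactly $f\xrightarrow{str-I^K}p$.

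I expect the only real difficulty to be this last step: one must split $S\setminus M$ into its $I$-part $A$ and $K$-part $B$ and, when enlarging the witness set of the hypothesis to a member of $F(I)$, discard only the $I$-part $A$; then one must verify that this enlargement perturbs the exceptional set of the modified function by at most the $K$-set $B$, so that the ideal $K$ still controls it. Everything else is formal, resting on Note~\ref{20}(ii), Lemma~\ref{0}, the preceding proposition, and the preceding theorem.
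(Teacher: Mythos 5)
Your proposal is correct. Part (i) and the forward half of (ii) are handled exactly as the paper does (the paper proves the converse of (i) directly by a filter-trace computation, whereas you obtain it by specializing the preceding theorem to $K=I$; both are valid, and your reduction is slightly more economical). For the converse of (ii), which is the only substantive step, your argument is in essence the same as the paper's, just phrased dually: the paper writes $M=M_1\cap M_2$ with $M_1\in F(I)$, $M_2\in F(K)$ and checks that $f^{-1}(\mathcal{N}_p(t))\cap M_1\supseteq (G\cap M_2)\cap M_1$ lies in the trace filter $F(K|_{M_1})$, while you take complements, write $S\setminus M=A\cup B$ with $A\in I$, $B\in K$, enlarge the witness set to $M'=S\setminus A\in F(I)$ (which is precisely the paper's $M_1$), and verify that the two modified functions differ only on $M'\setminus M\subseteq B\in K$, so the exceptional sets stay in $K$. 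The modified-function formulation you use and the restriction formulation the paper uses are equivalent by the paper's own remark following the definition of strong-$I^K$-convergence, so nothing is lost; if anything, your version makes explicit that no non-triviality of $I\vee K$ is needed and that the perturbation caused by enlarging the witness set is controlled by a single set of $K$, which is the real content of the step.
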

\begin{proof}
(i) Proof follows from lemma\ref{0} taking $K=I$.\\
Conversely, let $f$ be strong-$I^I$-convergent to $p$ then there is a set $M\in F(I)$ such that $f|_M$ is strong-$I|_M$-convergent. So for any strong-$t$-neighborhood $\mathcal{N}_p(t)$ of $p$ there exists $G\in F(I)$ such that
$$f^{-1}(\mathcal{N}_p(t))\cap M=G\cap M$$
Clearly $G\cap M\in F(I)$ and $G\cap M\subseteq f^{-1}(\mathcal{N}_p(t))$ i.e. $f^{-1}(\mathcal{N}_p(t))\in F(I)$ i.e. $f$ is strong convergence to $p$.\\
(ii) Suppose that $f$ is strong-$I^K$-convergent to $p$. Then there is a set $M\in F(I)$ such that $f|_M$ is strong-$K|_M$-convergent. Clearly $M\in F(I\vee K)$ since $M\in F(I)$. Therefore $f$ is also strong-$(I\vee K)^K$-convergent to $p$.\\
Conversely, let $f$ is strong-$(I\vee K)^K$-convergent to $p$ i.e. there is a set $M\in F(I\vee K)$ such that $f|_M$ is strong-$K|_M$-convergent. Then for any strong-$t$-neighborhood $\mathcal{N}_p(t)$ of $p$ there exists $G\in F(K)$ such that
$f^{-1}(\mathcal{N}_p(t))\cap M=G\cap M$. Since $M\in F(I\vee K)$, then $M=M_1\cap M_2$ for some $M_1\in F(I)$ and $M_2\in F(K)$. Now we have $$f^{-1}(\mathcal{N}_p(t))\cap M_1\supseteq f^{-1}(\mathcal{N}_p(t))\cap M=(G\cap M_2)\cap M_1$$
Since $G\cap M_2\in F(K)$, this shows that $f^{-1}(\mathcal{N}_p(t))\cap M_1\in F(K|_{M_1})$ i.e. $f$ is strong-$I^K$-convergent to $p$.
\end{proof}
\section{Basic Properties of Strong-$I^K$-Convergence in PM Spaces}
\begin{theorem}
Let $I\vee K$ be a nontrivial ideal on a non empty set $S$ and let $P$ be a PM-space. Then strong-$I^K$-convergence of a function $f:S\rightarrow P$ has a unique strong-$I^K$-limit.
\end{theorem}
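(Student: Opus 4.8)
The plan is to argue by contradiction, exploiting the Hausdorffness of the strong topology together with the description $F(I\vee K)=\{G\cap H:G\in F(I),\,H\in F(K)\}$ of the dual filter. Suppose $f\xrightarrow{str-I^K}p$ and $f\xrightarrow{str-I^K}q$ with $p\neq q$. Using the reformulation given in the Remark after the definition of strong-$I^K$-convergence, pick $M_p,M_q\in F(I)$ such that $f|_{M_p}$ is strong-$K|_{M_p}$-convergent to $p$ and $f|_{M_q}$ is strong-$K|_{M_q}$-convergent to $q$. Unwinding $F(K|_{M})=\{M\cap G:G\in F(K)\}$, this means: for every $t>0$ there are $G_p,G_q\in F(K)$ with $f^{-1}(\mathcal{N}_p(t))\cap M_p=G_p\cap M_p$ and $f^{-1}(\mathcal{N}_q(t))\cap M_q=G_q\cap M_q$.

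First I would fix a single parameter $t>0$ for which $\mathcal{N}_p(t)\cap\mathcal{N}_q(t)=\emptyset$. Since $p\neq q$ and the strong neighborhood system determines a Hausdorff topology, there are $V\in\aleph_p$, $W\in\aleph_q$ with $V\cap W=\emptyset$; choose $t_1,t_2>0$ with $\mathcal{N}_p(t_1)\subseteq V$ and $\mathcal{N}_q(t_2)\subseteq W$, and set $t=\min\{t_1,t_2\}$. From the identity $\mathcal{N}_a(s)=\{b\in P:d_L(F_{ab},\epsilon_0)<s\}$ the neighborhoods shrink with the parameter, so $\mathcal{N}_p(t)\subseteq\mathcal{N}_p(t_1)\subseteq V$ and $\mathcal{N}_q(t)\subseteq\mathcal{N}_q(t_2)\subseteq W$, hence $\mathcal{N}_p(t)\cap\mathcal{N}_q(t)=\emptyset$, and therefore $f^{-1}(\mathcal{N}_p(t))\cap f^{-1}(\mathcal{N}_q(t))=f^{-1}\!\bigl(\mathcal{N}_p(t)\cap\mathcal{N}_q(t)\bigr)=\emptyset$.

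Now I would combine the two filter equalities for this $t$:
$$(G_p\cap G_q)\cap(M_p\cap M_q)=\bigl(f^{-1}(\mathcal{N}_p(t))\cap M_p\bigr)\cap\bigl(f^{-1}(\mathcal{N}_q(t))\cap M_q\bigr)=\emptyset.$$
Since $F(K)$ is a filter, $G_p\cap G_q\in F(K)$, and since $F(I)$ is a filter, $M_p\cap M_q\in F(I)$; hence the left-hand side belongs to $F(I\vee K)$. But $I\vee K$ is nontrivial, so $\emptyset\notin F(I\vee K)$, a contradiction. Therefore $p=q$.

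The only mildly delicate point is the reduction to a common $t$ with disjoint strong-$t$-neighborhoods, where the $d_L$-characterization of $\mathcal{N}_a(t)$ (which makes these neighborhoods monotone in $t$) is exactly what is needed; everything afterwards is routine bookkeeping inside $F(I\vee K)$, and the hypothesis that $I\vee K$ be nontrivial enters precisely to guarantee that $F(I\vee K)$ cannot contain $\emptyset$.
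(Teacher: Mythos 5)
Your proof is correct and follows essentially the same route as the paper's: assume two distinct limits, use Hausdorffness of the strong topology to produce a single $t>0$ with $\mathcal{N}_p(t)\cap\mathcal{N}_q(t)=\emptyset$, and derive $\emptyset\in F(I\vee K)$, contradicting nontriviality of $I\vee K$. The only differences are cosmetic: you work directly with filter elements $G_p,G_q\in F(K)$ via the restriction formulation, whereas the paper manipulates the complementary sets $M_i\setminus f^{-1}(\mathcal{N}_\cdot(t))\in K$ before passing to the dual filter, and you are in fact a little more careful than the paper in justifying that a common parameter $t$ with disjoint strong-$t$-neighborhoods can be chosen.
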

\begin{proof}
If possible suppose that the strong-$I^K$-convergent function $f$ has two distinct strong-$I^K$-limits say $p$ and $q$. Since every PM-space is Hausdorff then there exists strong-t-neighborhood $\mathcal{N}_p(t))$ and $\mathcal{N}_q(t))$ for $(t>0)$ such that $\mathcal{N}_p(t)\cap \mathcal{N}_q(t)= \phi$.\\
Now $f$ has strong-$I^K$-limit $p$, so from the definition of strong-$I^K$-limit there exists a set $M_1\in F(I)$ such that the function $g:S \rightarrow P$ given by  
\[g(s)=\left\{\begin {array}{ll}
        f(s) & \mbox{if $s\in M_1$} \\
		p & \mbox{if $s\notin M_1$}
		\end{array}
		\right. \] \\
is strong-$K$-convergent to $p$. So, $g^{-1}(\mathcal{N}_p(t)=\{s\in M_1:g(s)\in \mathcal{N}_p(t)\}\cup \{s\in S\setminus M_1:g(s)\in \mathcal{N}_p(t)\}=(S\setminus M_1)\cup f^{-1}(\mathcal{N}_p(t))=S \setminus (M_1\setminus f^{-1}(\mathcal{N}_p(t)))\in F(K)$
i.e. $M_1\setminus f^{-1}(\mathcal{N}_p(t))\in K$ or $M_1\setminus N_1\in K$ where $N_1= f^{-1}(\mathcal{N}_p(t))$.\\
Similarly, $f$ has strong-$I^K$-limit $q$ so there exists a set $M_2\in F(I)$ s.t. $M_2\setminus f^{-1}(\mathcal{N}_q(t))\in K$ or $M_2\setminus N_2\in K$ where $N_2=f^{-1}(\mathcal{N}_q(t))$
So $(M_1\setminus N_1)\cup (M_2\setminus N_2)\in K$. Then $(M_1\cap M_2)\cap(N_1\cap N_2)^c\subset (M_1\cap N^c_1)\cup(M_2\cap N^c_2)\in K$.
Thus $(M_1\cap M_2)\cap(N_1\cap N_2)^c\in K$ i.e. $(M_1\cap M_2)\setminus (f^{-1}(\mathcal{N}_p(t))\cap f^{-1}(\mathcal{N}_q(t)))\in K$ i.e. 
Since $f^{-1}(\mathcal{N}_p(t)\cap \mathcal{N}_q(t))=\phi$ then $M_1\cap M_2\in K$ i.e.
\begin{equation}\label{6}
S\setminus (M_1\cap M_2)\in F(K)
\end{equation} 
Since $M_1, M_2\in F(I)$,
\begin{equation}\label{7}
M_1\cap M_2\in F(I)
\end{equation} 
 Since $I\vee K$ is non-trivial so the dual filter $F(I\vee K)=\{G\cap H:G\in F(I),H\in F(K)\}$ exists. Now from the equation \ref{6} and equation \ref{7} we get $\phi\in F(I\vee K)$. which is a contradiction. Hence the strong-$I^K$-limit is unique.
\end{proof}
\begin{theorem}
If $I$ and $K$ be two admissible ideal and if there exists an injective function $f:S\rightarrow X\subset P$ which is strong-$I^K$-convergent to $p_0\in P$ then $p_0$ is a limit point of $X$.
\end{theorem}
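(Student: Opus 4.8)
The plan is to prove slightly more than the statement: that every strong-$t$-neighborhood $\mathcal{N}_{p_0}(t)$ of $p_0$ contains infinitely many distinct points of $X$, which at once makes $p_0$ a limit point of $X$. So I would fix $t>0$ and, using that $f$ is strong-$I^K$-convergent to $p_0$, choose $M\in F(I)$ for which the auxiliary function $g$ (equal to $f$ on $M$ and to $p_0$ on $S\setminus M$) is strong-$K$-convergent to $p_0$; thus $g^{-1}(P\setminus\mathcal{N}_{p_0}(t))\in K$. Since $F_{p_0p_0}=\epsilon_0$ forces $p_0\in\mathcal{N}_{p_0}(t)$, no point of $S\setminus M$ contributes to $g^{-1}(P\setminus\mathcal{N}_{p_0}(t))$, so this set equals $A_t:=\{s\in M: f(s)\notin\mathcal{N}_{p_0}(t)\}$, and $A_t\in K$.

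Next I would examine the complementary set $M\setminus A_t=\{s\in M: f(s)\in\mathcal{N}_{p_0}(t)\}$ and argue it must be infinite. Were it finite, admissibility of $K$ would put it in $K$, and then $M=A_t\cup(M\setminus A_t)\in K$; combined with $S\setminus M\in I$ (which follows from $M\in F(I)$) this gives $S=M\cup(S\setminus M)\in I\vee K$, contradicting nontriviality of $I\vee K$. So $M\setminus A_t$ is infinite, and here injectivity of $f$ enters: $f$ maps the infinite set $M\setminus A_t$ one-to-one onto an infinite subset of $X\cap\mathcal{N}_{p_0}(t)$, which therefore contains a point different from $p_0$. As $t>0$ was arbitrary, every strong neighborhood of $p_0$ meets $X\setminus\{p_0\}$, so $p_0$ is a limit point of $X$.

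I expect the delicate point to be exactly the step that excludes $M\in K$: it relies on $I\vee K$ being nontrivial, equivalently on the restricted ideal $K|_M$ staying proper, which is the natural companion hypothesis here (as in the preceding uniqueness theorem). Indeed, if one allowed an $M\in F(I)$ with $K|_M=2^{M}$, then $f|_M$ would be vacuously strong-$K|_M$-convergent to every point and the conclusion would fail; so this assumption cannot be dropped. Everything else --- the identification of $g^{-1}(P\setminus\mathcal{N}_{p_0}(t))$ with $A_t$, the downward-closure bookkeeping for $I$ and $K$, and the passage from ``infinitely many preimages'' to ``infinitely many image points'' via injectivity --- is routine.
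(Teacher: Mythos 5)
Your proof is correct and follows the same basic route as the paper's: pass to the auxiliary function $g$, use admissibility of $K$ to turn ``not in $K$'' into ``infinite'', and use injectivity of $f$ to produce a point of $X\cap\mathcal{N}_{p_0}(t)$ other than $p_0$. But your version is tighter at the one delicate point. The paper works with the whole set $C=g^{-1}(\mathcal{N}_{p_0}(t))$, notes $C\in F(K)$ hence $C$ is infinite, and then simply ``chooses $k_0\in C$ with $g(k_0)\neq p_0$''; that choice is not justified as written, because $C$ always contains all of $S\setminus M$, where $g$ is constantly $p_0$, so the infinitude of $C$ by itself says nothing about $\{s\in M: f(s)\in\mathcal{N}_{p_0}(t)\}$. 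You isolate exactly that set, $M\setminus A_t$, and show it is infinite, which is what injectivity actually needs. The price is the step excluding $M\in K$, which forces you to invoke nontriviality of $I\vee K$ --- and you are right that this hypothesis is genuinely needed and is missing from the statement: if $I\vee K$ is trivial one can take $M\in F(I)\cap K$, every function becomes vacuously strong-$I^K$-convergent to every point, and the conclusion fails. The paper's proof silently relies on the same fact (its choice of $k_0$ can only be repaired by the same ``$M\notin K$'' argument), so your write-up both completes the argument and makes the required extra hypothesis explicit.
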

\begin{proof}
Let the function $f$ has strong-$I^K$-limit $p_0$, so there exists a set $A\in F(I)$ such that the function $g:S \rightarrow P$ given by  
\[g(s)=\left\{\begin {array}{ll}
        f(s) & \mbox{if $s\in A$} \\
		p_0 & \mbox{if $s\notin A$}
		\end{array}
		\right. \] \\
is strong-$K$-convergent to $p_0$. Let $\mathcal{N}_{p_0}(t)$ be an arbitrary strong-$t$-neighborhood. Then the set $C$ (say)$= g^{-1}(\mathcal{N}_{p_0}(t))=\{s:g(s)\in \mathcal{N}_{p_0}(t)\}\in F(K)$. So $C\notin K$ i.e. the set $C$ is an infinite set, as $K$ is admissible ideal. Choose $k_0\in \{s:g(s)\in \mathcal{N}_{p_0}(t)\}$ such that $g(k_0)\neq p_0$ then $g(k_0)\in \mathcal{N}_{p_0}(t)\cap (X\setminus \{p_0\})$. Thus $p_0$ is a limit point of $X$
\end{proof}
\begin{theorem}
A Continuous function $h:P\rightarrow P$ preserves strong-$I^K$-convergence.
\end{theorem}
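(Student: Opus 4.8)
The plan is to reduce the statement to the fact, already recorded in Note \ref{20}(iii), that a continuous map preserves strong-$K$-convergence (the proof there uses no special feature of $I$, so it applies verbatim with the ideal $K$). Concretely, the claim to establish is that if $f\xrightarrow{str-I^K} p$, then $h\circ f\xrightarrow{str-I^K} h(p)$.

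First I would unwind the hypothesis. Since $f\xrightarrow{str-I^K} p$, there is a set $M\in F(I)$ such that the auxiliary function $g:S\to P$ defined by $g(s)=f(s)$ for $s\in M$ and $g(s)=p$ for $s\notin M$ satisfies $g\xrightarrow{str-K} p$. Now form the composite $h\circ g:S\to P$. Because $h$ is continuous on $P$, Note \ref{20}(iii), applied with $K$ in the role of the ideal, yields $h\circ g\xrightarrow{str-K} h(p)$.

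The only genuine bookkeeping step is to recognise that $h\circ g$ is precisely the auxiliary function attached to $h\circ f$ and the same set $M$. Indeed, for $s\in M$ we have $(h\circ g)(s)=h(f(s))=(h\circ f)(s)$, while for $s\notin M$ we have $(h\circ g)(s)=h(p)$; this is exactly the function that, by the definition of strong-$I^K$-convergence, must be strong-$K$-convergent to $h(p)$ in order to certify $h\circ f\xrightarrow{str-I^K} h(p)$. Since $M\in F(I)$ and we have just shown this function is strong-$K$-convergent to $h(p)$, the definition is satisfied, and hence $h\circ f\xrightarrow{str-I^K} h(p)$.

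The main obstacle is only a mild one: ensuring the continuity-preservation statement is available for $K$ rather than merely for $I$. As that argument rests solely on continuity (preimages under $h$ of strong-$t$-neighborhoods of $h(p)$ contain strong-neighborhoods of $p$) together with membership in the associated filter, nothing changes and the reduction is legitimate. I would also note in passing that no nontriviality assumption on $I\vee K$ is required here, since we only need to exhibit a witnessing set $M$, not to invoke uniqueness of the strong-$I^K$-limit.
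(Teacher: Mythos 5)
Your proof is correct and follows essentially the same route as the paper's: both exhibit the same witnessing set $M\in F(I)$ and show that the auxiliary function attached to $h\circ f$ and $M$ (which, as you observe, is exactly $h\circ g$) is strong-$K$-convergent to $h(p)$. The only difference is that you invoke Note \ref{20}(iii), applied with $K$ in place of $I$, for the continuity step, whereas the paper re-derives that neighborhood argument inline; your remark that the ideal plays no special role in that note is accurate, so the reduction is legitimate.
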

\begin{proof}
Let the function $f$ has strong-$I^K$-limit $p$, so there exists a set $M\subset S\in F(I)$ s.t. the function $g:S \rightarrow P$ given by  
\[g(s)=\left\{\begin {array}{ll}
        f(s) & \mbox{if $s\in M$} \\
		p & \mbox{if $s\notin M$}
		\end{array}
		\right. \] \\
is strong-$K$-convergent to $p$. Let $\mathcal{N}_p(t)$ be a strong-$t$-neighborhood of point $p$. Then $g^{-1}(\mathcal{N}_p(t))=(S\setminus M)\cup f^{-1}(\mathcal{N}_p(t))=S\setminus (M\setminus f^{-1}(\mathcal{N}_p(t)))\in F(K)$ i.e. $M\setminus f^{-1}(\mathcal{N}_p(t))\in K$. So to prove the theorem, we have to show $h(f(p))\xrightarrow{str-I^K} h(p)$. Now it suffices to show that the function $g_1:S \rightarrow P$ given by
\[g_1(s)=\left\{\begin {array}{ll}
        (h\circ f)(s) & \mbox{if $s\in M$} \\
		h(p) & \mbox{if $s\notin M$}
		\end{array}
		\right. \]
is strong-$K$-convergent to $h(p)$. Let $\mathcal{N}_{h(p)}(t)$ be a strong-$t$-neighborhood containing $h(p)$. Since $h$ is continuous so there exists a strong-$t$-neighborhood $\mathcal{N}_p(t)$ containing $p$ such that $h(\mathcal{N}_p(t))\subset \mathcal{N}_{h(p)}(t)$. Clearly
$\{s: h(f(s))\notin \mathcal{N}_{h(p)}(t)\}\subset \{s:f(s)\notin \mathcal{N}_p(t)\}$ which implies that $\{s:f(s)\in \mathcal{N}_p(t)\}\subset \{s:h\circ f(s)\in \mathcal{N}_{h(p)}(t)\}$ i.e. $f^{-1}(\mathcal{N}_p(t))\subset (h\circ f)^{-1}(\mathcal{N}_{h(p)}(t))$. So $M\setminus (h\circ f)^{-1}(\mathcal{N}_{h(p)}(t))\subset M\setminus f^{-1}(\mathcal{N}_p(t))$. Therefore $M\setminus (h\circ f)^{-1}(\mathcal{N}_{h(x)}(t))\in K$ as $M\setminus f^{-1}(\mathcal{N}_p(t))\in K$. So its complement $g_1^{-1}(\mathcal{N}_{h(p)}(t))\in F(K)$, as required. Hence $h(f(p))\xrightarrow{str-I^K} h(p)$.
\end{proof}
\begin{theorem}
If $P$ has no limit point then strong-$I$-convergence implies strong-$I^K$-convergence, where $I$ and $K$ are two admissible ideals.
\end{theorem}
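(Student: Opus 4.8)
The plan is to exploit the hypothesis ``$P$ has no limit point'' to conclude that the strong topology on $P$ is discrete, so that every singleton $\{p\}$ is itself a strong neighbourhood of $p$. First I would check this translation: if no point of $P$ is a limit point of $P$, then for a given $p\in P$ there is a strong neighbourhood $V$ of $p$ with $V\cap(P\setminus\{p\})=\phi$, hence $V=\{p\}$. Since the strong $t$-neighbourhoods $\{\mathcal{N}_p(t):t>0\}$ form a neighbourhood base at $p$, there is some $t_0>0$ with $\mathcal{N}_p(t_0)\subseteq V=\{p\}$, and because $p\in\mathcal{N}_p(t_0)$ always, this forces $\mathcal{N}_p(t_0)=\{p\}$.

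Next, assume $f\xrightarrow{str-I} p$. Applying the definition of strong-$I$-convergence to the particular neighbourhood $\mathcal{N}_p(t_0)=\{p\}$ gives
$$M:=f^{-1}(\mathcal{N}_p(t_0))=f^{-1}(\{p\})=\{s\in S:f(s)=p\}\in F(I).$$
Now I would build the auxiliary function $g:S\rightarrow P$ from this $M$ exactly as in the definition of strong-$I^K$-convergence, namely $g(s)=f(s)$ for $s\in M$ and $g(s)=p$ for $s\notin M$. By the very choice of $M$ we have $f(s)=p$ for every $s\in M$, so $g$ is just the constant function with value $p$.

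Finally, I would observe that the constant function $g\equiv p$ is strong-$K$-convergent to $p$: for every strong-$t$-neighbourhood $\mathcal{N}_p(t)$ we have $g^{-1}(\mathcal{N}_p(t))=S\in F(K)$. Since $M\in F(I)$, this $M$ together with $g$ witnesses strong-$I^K$-convergence, so $f\xrightarrow{str-I^K} p$. The only step that genuinely uses the hypothesis — and the only one needing any care — is the first one, the passage from ``no limit point'' to the existence of the singleton strong neighbourhood $\mathcal{N}_p(t_0)$; after that the argument is immediate, and in fact the admissibility of $I$ and $K$ is not needed beyond the standing assumptions of the section.
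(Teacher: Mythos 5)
Your proposal is correct and is essentially the paper's own argument: both use the no-limit-point hypothesis to produce a singleton strong neighbourhood $\mathcal{N}_p(t_0)=\{p\}$, take $M=f^{-1}(\{p\})\in F(I)$, and note that the resulting auxiliary function $g$ is constant and hence trivially strong-$K$-convergent. Your added care in justifying the passage from ``no limit point'' to the existence of $\mathcal{N}_p(t_0)=\{p\}$, and your remark that admissibility of the ideals is not actually used, are both accurate refinements of what the paper does implicitly.
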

\begin{proof}
Let $f:S \rightarrow P$ be a function such that $f\xrightarrow{str-I} p$. Since $P$ has no limit point so $\mathcal{N}_p(t)=\{p\}$ is open where $\mathcal{N}_p(t)$ is strong-$t$-neighborhood. Thus we have $f^{-1}(P\setminus \mathcal{N}_p(t))=\{s\in S: f(s)\notin \mathcal{N}_p(t)\}\in I$. Then $M = f^{-1}(\mathcal{N}_p(t))=\{s\in S: f(s)\in \mathcal{N}_p(t)\}\in F(I)$. Thus there exists a set $M\in F(I)$ such that the function $g:S\rightarrow P$ defined by
\[g(s)=\left\{\begin {array}{ll}
        f(s) & \mbox{if $s\in M$} \\
		p & \mbox{if $s\notin M$}
		\end{array}
		\right. \]
is strong-$K$-convergent $p$. (Since for any strong-$t$-neighborhood $\mathcal{N}_p(t)$ containing $p$ then $\{s\in S:g(s)\notin \mathcal{N}_p(t)\}=\phi\in K$). So $f\xrightarrow{str-I^K} p$.
\end{proof}
\begin{note}
Converse of above theorem may not be true. Let $I$ and $K$ be two ideals on a set $S$. Consider a set $A\in K\setminus I$. Let $q\in X\setminus \{p\}$ be a fixed element and define a function $f:S\rightarrow P$ by
\[f(s)=\left\{\begin {array}{ll}
        p & \mbox{if $s\in S\setminus A$} \\
		q & \mbox{otherwise}
		\end{array}
		\right. \]
Now if $\mathcal{N}_p(t)$ is any strong-$t$-neighborhood containing $p$ then $f^{-1}(\mathcal{N}_p(t))=S\setminus A$ if $q\notin \mathcal{N}_p(t)$ and $f^{-1}(\mathcal{N}_p(t))=S$ if $q\in \mathcal{N}_p(t)$. So in both case $f^{-1}(\mathcal{N}_p(t))\in F(K)$. Hence Strong-$K$-$\lim f=p$ then by lemma(\ref{0}) we get Strong-$I^K$-$\lim f=p$. But $\mathcal{N}_p(t_0)=\{p\}$ is also a strong-$t_0$-neighborhood containing $p$ since $P$ has no limit point and $f^{-1}(P\setminus \mathcal{N}_p(t_0)) = A \notin I$. Hence $f$ is not strong-$I$-convergent to $p$.
\end{note}
\subsection{Additive Property with Strong-$I$ and $I^K$-Convergence}
When we are trying to find the relationship between strong-$I$ and $I^K$-convergence, the following condition is important.
Several formulation of AP$(I,K)$-condition are defined in \cite{23}. Before giving the definition we need to state another definition of $K$-pseudo intersection of a system.
\begin{definition} \cite{23}
Let $K$ be an ideal on a set $S$. We write $A\subset_K B$ whenever $A\setminus B\in K.$ If $A\subset_K B$ and $B\subset_K A$ then we write $A\sim_K B$. Clearly $A\sim_K B \Leftrightarrow A\bigtriangleup B\in K$.\\
We say that a set $A$ is $K$-pseudo intersection of a system $\{A_n: n\in \mathbb{N}\}$ if $A\subset_K A_n$ holds for each $n\in \mathbb{N}$
\end{definition}
\begin{definition} \cite{23}
Let $I,K$ be ideals on the set $S$. We say that $I$ has additive property with respect to $K$ or that the condition AP$(I,K)$ holds if any of the equivalent condition of following holds:
\item[(i)] For every sequence $(A_n)_{n\in \mathbb{N}}$ of sets from  $I$ there is $A\in I$ such that $A_n\subset_K A$ for all $n'$s.
\item[(ii)] Any sequence $(F_n)_{n\in \mathbb{N}}$ of sets from $F(I)$ has $K$-pseudo intersection in $F(I)$.
\item[(iii)] For every sequence $(A_n)_{n\in \mathbb{N}}$ of sets from  $I$ there exists a sequence $(B_n)_{n\in \mathbb{N}}\in I$ such that $A_j\sim_K B_j$ for $j\in \mathbb{N}$ and $B=\displaystyle{\cup_{j\in \mathbb{N}}}B_j\in I$.
\item[(iv)] For every sequence of mutually disjoint sets $(A_n)_{n\in \mathbb{N}}\in I$ there exists a sequence $(B_n)_{n\in \mathbb{N}}\in I$ such that $A_j\sim_K B_j$ for $j\in \mathbb{N}$ and $B=\displaystyle{\cup_{j\in \mathbb{N}}}B_j\in I$.
\item[(v)] For every non-decreasing sequence $A_1\subseteq A_2\subseteq \cdots \subseteq A_n\cdots $ of sets from $I$ $\exists$  a sequence $(B_n)_{n\in \mathbb{N}}\in I$ such that $A_j\sim_K B_j$ for $j\in \mathbb{N}$ and $B=\displaystyle{\cup_{j\in \mathbb{N}}}B_j\in I$.
\item[(vi)] In the Boolean algebra $2^S/K$ the ideal $I$ corresponds to a $\sigma$-directed subset,i.e. every countable subset has an upper bound.
\end{definition}
In the case $S=\mathbb{N}$ and $K=$ Fin we get the condition AP from \cite{19} which characterize ideal such that $I^*$-convergence implies $I$-convergence. The condition AP$(I,K)$ is more generalization of condition AP from \cite{6}\cite{19}. \\
\begin{theorem}
Let $I$ and $K$ be two ideals on an arbitrary non-empty set $S$ and $P$ be a PM space. If the ideal $I$ has the additive property with respect to $K$ then strong-$I$-convergence implies strong-$I^K$-convergence.
\end{theorem}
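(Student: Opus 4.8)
The plan is to mimic the classical argument of Kostyrko, Šalát and Wilczyński that the AP condition makes $I^*$-convergence follow from $I$-convergence, in the form adapted to AP$(I,K)$ by Mačaj and Sleziak. Assume $f\xrightarrow{str-I}p$. The first step is to extract a countable family of ``bad'' sets that captures all the relevant information: for each $n\in\mathbb{N}$ put $A_n=f^{-1}(P\setminus\mathcal{N}_p(1/n))=\{s\in S:f(s)\notin\mathcal{N}_p(1/n)\}$, which lies in $I$ by the definition of strong-$I$-convergence. A small preliminary observation I would record is that the neighbourhoods $\mathcal{N}_p(1/n)$ form a base at $p$: since $F_{pb}$ is non-decreasing, $t'<t$ implies $\mathcal{N}_p(t')\subseteq\mathcal{N}_p(t)$, so every strong-$t$-neighbourhood of $p$ contains some $\mathcal{N}_p(1/n)$; in particular the $A_n$ form a non-decreasing sequence in $I$ (though monotonicity is not strictly needed).

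Next I would apply the hypothesis AP$(I,K)$ to the sequence $(A_n)_{n\in\mathbb{N}}$ (using formulation (i), or equivalently (v) since the sequence is non-decreasing): there is a set $A\in I$ with $A_n\subset_K A$, i.e. $A_n\setminus A\in K$ for every $n$. Set $M=S\setminus A$, so that $M\in F(I)$, and define $g:S\to P$ by $g(s)=f(s)$ for $s\in M$ and $g(s)=p$ for $s\notin M$, exactly as in the definition of strong-$I^K$-convergence. It then remains only to check that $g\xrightarrow{str-K}p$, and the conclusion $f\xrightarrow{str-I^K}p$ follows immediately.

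For that last verification, fix an arbitrary strong-$t$-neighbourhood $\mathcal{N}_p(t)$ and choose $n$ with $1/n<t$, so that $\mathcal{N}_p(1/n)\subseteq\mathcal{N}_p(t)$ and hence $f^{-1}(P\setminus\mathcal{N}_p(t))\subseteq A_n$. Since $g$ equals $p\in\mathcal{N}_p(t)$ off $M$ (recall $F_{pp}=\epsilon_0$, so $p\in\mathcal{N}_p(t)$ for every $t>0$) and equals $f$ on $M$, one gets $g^{-1}(P\setminus\mathcal{N}_p(t))=M\cap f^{-1}(P\setminus\mathcal{N}_p(t))\subseteq M\cap A_n=A_n\setminus A\in K$, and since $K$ is closed under subsets, $g^{-1}(P\setminus\mathcal{N}_p(t))\in K$. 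As $\mathcal{N}_p(t)$ was arbitrary, $g$ is strong-$K$-convergent to $p$.

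The argument is essentially routine once these ingredients are assembled; the only real point requiring care is the reduction to a countable family, i.e. noting that $\{\mathcal{N}_p(1/n)\}_{n}$ is a neighbourhood base at $p$, because the AP$(I,K)$ condition is a statement about countable sequences of sets and without this reduction the hypothesis could not be invoked. Everything else is bookkeeping with ideals and the elementary set inclusions displayed above.
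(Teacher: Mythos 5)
Your proof is correct and follows essentially the same route as the paper: reduce to the countable family of neighbourhoods $\mathcal{N}_p(1/n)$, invoke AP$(I,K)$ to obtain a single set in $F(I)$ that works for all of them, and verify strong-$K$-convergence of the modified function. The only cosmetic difference is that you use formulation (i) of AP$(I,K)$ on the sets $A_n\in I$ while the paper uses the dual formulation (ii) ($K$-pseudo intersection of sets in $F(I)$); you are also slightly more careful in justifying that $\{\mathcal{N}_p(1/n)\}_n$ is a base at $p$, which the paper simply asserts.
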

\begin{proof}
Let $f:S\rightarrow P$ be a function such that $f\xrightarrow{str-I} p$. Let $\mathcal{B}=\{\mathcal{N}_p(t_n):n\in \mathbb{N}\}$ be a countable base for $P$ at the point $p$. Now from the definition of strong-$I$-convergence we have $f^{-1}(\mathcal{N}_p(t_n))\in F(I)$ for each n, thus there exists $A\in F(I)$ with $A\subset_K f^{-1}(\mathcal{N}_p(t_n))$ i.e. $A\setminus f^{-1}(\mathcal{N}_p(t_n))\in K$.
Now it suffices to show that the function the $g:S\rightarrow P$ defined by
\[g(s)=\left\{\begin {array}{ll}
        f(s) & \mbox{if $s\in A$} \\
		p & \mbox{if $s\notin A$}
		\end{array}
		\right. \]
is strong-$K$-convergent to $p$. Since for $\mathcal{N}_p(t_n)\in B$ ,we have $g^{-1}(\mathcal{N}_p(t_n))=(S\setminus A)\cup f^{-1}(\mathcal{N}_p(t_n))=S\setminus (A\setminus f^{-1}(\mathcal{N}_p(t_n)))$. Since the set $A\setminus f^{-1}(\mathcal{N}_p(t_n)) \in K$. So $S\setminus (A\setminus f^{-1}(\mathcal{N}_p(t_n)))\in F(K)$ i.e. $g^{-1}(\mathcal{N}_p(t_n))\in F(K)$. Therefore $g$ is strong-$K$-convergent to $p$.
\end{proof}
\section{Strong-$I^K$-Cauchy Functions}
Now we can define in a full generality the notion of Cauchy function and make some basic observations.
\begin{definition}(cf \cite{38})
Let $(P,\mathcal{F},\tau)$ be a PM space. A function $f:S\rightarrow P$ is called strong-$I$-Cauchy if for any $t>0$ there exists an $m\in S$ such that
$$\{s\in S:f(s)\notin \mathcal{N}_{f(m)}(t)\}\in I$$
\end{definition}
\begin{lemma}
Let $(P,\mathcal{F},\tau)$ be a PM space and $I$ be an ideal on a set $S$. For a function $f:S\rightarrow P$ following are equivalent.\\
(i) $f$ is strong-$I$-Cauchy.\\
(ii) For any $t>0$ there is $m\in S$ such that $\{s\in S:f(s)\in \mathcal{N}_{f(m)}(t)\}\in F(I)$.\\
(iii) For every $t>0$ there exists a set $A\in I$ such that  $s,m\notin A$ implies 
$f(s)\in \mathcal{N}_{f(m)}(t)$
\end{lemma}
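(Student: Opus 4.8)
The plan is to establish the cycle by first noting that $(i)\Leftrightarrow(ii)$ is purely set-theoretic, then proving $(iii)\Rightarrow(i)$ by a one-line choice of witness, and finally deriving the substantive implication $(i)\Rightarrow(iii)$ from the vicinity machinery of Theorem \ref{2}. Only this last step uses anything about the PM space beyond the definitions.

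For $(i)\Leftrightarrow(ii)$: fix $m\in S$ and $t>0$. The sets $\{s\in S:f(s)\notin\mathcal{N}_{f(m)}(t)\}$ and $\{s\in S:f(s)\in\mathcal{N}_{f(m)}(t)\}$ are complementary in $S$, so by the definition of the dual filter $F(I)=\{B\subseteq S:S\setminus B\in I\}$, one lies in $I$ precisely when the other lies in $F(I)$. Hence the same element $m$ serves as a witness of $(i)$ for a given $t$ if and only if it serves as a witness of $(ii)$ for that $t$, and the equivalence is immediate.

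For $(iii)\Rightarrow(i)$: fix $t>0$ and let $A\in I$ be as in $(iii)$. Since $I$ is nontrivial, $S\notin I$, so $S\setminus A\neq\emptyset$; choose any $m\in S\setminus A$. Then for every $s\notin A$ we have $f(s)\in\mathcal{N}_{f(m)}(t)$, so $\{s\in S:f(s)\notin\mathcal{N}_{f(m)}(t)\}\subseteq A\in I$, and by the hereditary property of ideals this set lies in $I$. Thus $f$ is strong-$I$-Cauchy.

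The main content is $(i)\Rightarrow(iii)$, and here I would invoke Theorem \ref{2} and the Note following it. Given $t>0$, pick $\eta>0$ with $\mathscr{U}(\eta)\circ\mathscr{U}(\eta)\subseteq\mathscr{U}(t)$, so that $F_{ac}(t)>1-t$ whenever $F_{ab}(\eta)>1-\eta$ and $F_{bc}(\eta)>1-\eta$; this is exactly where the standing continuity assumption on $\tau$ is used. Applying $(i)$ to $\eta$ yields $m_0\in S$ with $A:=\{s\in S:f(s)\notin\mathcal{N}_{f(m_0)}(\eta)\}\in I$. If now $s,m\notin A$, then $F_{f(s)f(m_0)}(\eta)>1-\eta$ and, by symmetry of $\mathcal{F}$, $F_{f(m_0)f(m)}(\eta)=F_{f(m)f(m_0)}(\eta)>1-\eta$, so the vicinity inclusion gives $F_{f(s)f(m)}(t)>1-t$, i.e. $f(s)\in\mathcal{N}_{f(m)}(t)$, which is $(iii)$. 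The only place requiring care is the extraction of the auxiliary $\eta$ from Theorem \ref{2}; everything else is bookkeeping with complements and the definitions of $\mathcal{N}_{\cdot}(\cdot)$, $I$, and $F(I)$.
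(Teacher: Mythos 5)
Your proof is correct. The paper itself omits the argument, stating only that it is straightforward, so there is nothing to compare against; your write-up supplies exactly what is missing. The decomposition is the natural one: (i)$\Leftrightarrow$(ii) is immediate from the duality between $I$ and $F(I)$ applied to complementary sets with the same witness $m$; (iii)$\Rightarrow$(i) works because nontriviality of $I$ (a standing assumption of Section 3) guarantees $S\setminus A\neq\emptyset$, so a witness $m$ can be chosen outside $A$; and you correctly identify that the only substantive step is (i)$\Rightarrow$(iii), where the passage from a single center $f(m_0)$ to arbitrary $m\notin A$ requires the triangle-type estimate of Theorem \ref{2} and its accompanying Note, hence the standing continuity assumption on $\tau$ together with the symmetry $\mathcal{F}(a,b)=\mathcal{F}(b,a)$. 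No gaps.
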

\begin{proof}
The proof is straightforward and so it is omitted.
\end{proof}
\begin{note}\label{5}
(i) Note that in view of cf \cite{38} in a PM space $(P,\mathcal{F},\tau)$ every strong-$I$-convergent function is strong-$I$-Cauchy.\\
(ii) Clearly if $I_1$, $I_2$ are ideals on a set $S$ such that $I_1\subseteq I_2$ and if $f:S\rightarrow P$ is $I_1$-Cauchy then it is also $I_2$-Cauchy.
\end{note}
\begin{definition}
Let $S$ be an arbitrary set and $(P,\mathcal{F},\tau)$ be a PM space. Let $I,K$ be ideals on the set $S$. A function $f:S\rightarrow P$ is said to be strong-$I^K$-Cauchy if there is $M\in F(I)$ such that the function $f|_M$ is Strong-$K|_M$-Cauchy.
\end{definition}
\begin{note}
If $I$ is an ideal on $S$ and $M\subseteq S$ then we denote by $I|_M$ the trace of the ideal $I$ on the subset $M$ i.e. $I|_M=\{A\cap M:A\in I\}$ and the dual filter is   $F(I|_M)=\{G\cap M: G\in F(I)\}$.
\end{note}
If $K=$Fin we obtain the notion of strong-$I^*$-Cauchy functions. It is relatively easy to see directly from definition and note \ref{5}(ii) that every strong-$I^K$-convergent function is strong-$I^K$-Cauchy.
\begin{lemma}\label{11}
If $I$ and $K$ are ideals on an arbitrary set $S$ and $P$ be a PM space and a function $f:S\rightarrow P$ is strong-$K$-Cauchy then it is also strong-$I^K$-Cauchy.
\end{lemma}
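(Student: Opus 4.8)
The plan is to observe that the definition of strong-$I^K$-Cauchyness only asks for the existence of \emph{some} $M \in F(I)$ with $f|_M$ strong-$K|_M$-Cauchy, and the largest possible candidate, $M = S$, is always admissible. So first I would record the elementary fact that $S \in F(I)$ for every ideal $I$ on $S$: indeed $S \setminus S = \emptyset \in I$ by the second ideal axiom (together with the note that $F(I) = \{A \subseteq S : S\setminus A \in I\}$), hence $S \in F(I)$. Taking $M = S$ we then have $f|_M = f$ and $K|_M = K|_S = K$, so that the phrase ``$f|_M$ is strong-$K|_M$-Cauchy'' becomes literally ``$f$ is strong-$K$-Cauchy''.

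Next I would simply unwind the definitions to make this identification explicit. By definition (applied with the ideal $K|_S = K$ on the set $M = S$), $f|_S$ is strong-$K|_S$-Cauchy precisely when for every $t > 0$ there exists $m \in S$ with $\{s \in S : f(s) \notin \mathcal{N}_{f(m)}(t)\} \in K|_S = K$, which is exactly the hypothesis that $f$ is strong-$K$-Cauchy. Therefore $M = S$ is a witness in $F(I)$ showing that $f$ is strong-$I^K$-Cauchy, and the proof is complete.

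There is essentially no obstacle here: the lemma is the Cauchy-analogue of Lemma \ref{0}, and, exactly as there, it follows at once from the definition once one notes that the whole set $S$ always belongs to the dual filter $F(I)$. The argument parallels the corresponding statement for $I^K$-Cauchy sequences in \cite{7} and the convergence version in Lemma $3.5$ of \cite{23}, so in the write-up I would keep it to one or two lines and, if desired, merely point to that parallel.
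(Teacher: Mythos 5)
Your proposal is correct and coincides exactly with the paper's own argument: take $M=S$, note that $S\in F(I)$ and $K|_S=K$, so the hypothesis that $f$ is strong-$K$-Cauchy is literally the statement that $f|_M$ is strong-$K|_M$-Cauchy. Nothing further is needed.
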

\begin{proof}
If we take $M=S$ then $M\in F(I)$. In this case $K|_M=K$, hence $f$ is strong-$K|_M$-Cauchy. This shows that $f$ is strong-$I^K$-Cauchy.
\end{proof}
\begin{lemma}
Let $I,J,K$ and $L$ be ideals on a set $S$ such that $I\subseteq J$ and $K\subseteq L$ and let $P$ be a PM space. Then for any function $f:S\rightarrow P$, we have\\
(i)strong-$I^K$-Cauchy ~$\Rightarrow$ ~strong-$J^K$-Cauchy and (ii)strong-$I^K$-Cauchy ~$\Rightarrow$ ~strong-$I^L$-Cauchy.
\end{lemma}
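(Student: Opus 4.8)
The plan is to exploit monotonicity of the filters and of the trace ideals, in exact parallel with the convergence analogue already proved in the Proposition of Section 3. Both implications reduce to the observation that enlarging $I$ enlarges the pool of admissible ``index sets'' $M\in F(I)$, while enlarging $K$ enlarges the trace ideal $K|_M$, and strong-Cauchyness is preserved under both kinds of enlargement by Note \ref{5}(ii).

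For part (i), I would start from the hypothesis: since $f$ is strong-$I^K$-Cauchy there is a set $M\in F(I)$ such that $f|_M$ is strong-$K|_M$-Cauchy. Because $I\subseteq J$, for any $A\subseteq S$ with $S\setminus A\in I$ we also have $S\setminus A\in J$, hence $F(I)\subseteq F(J)$ and therefore $M\in F(J)$. Thus the very same $M$ witnesses that $f$ is strong-$J^K$-Cauchy, and nothing further is needed.

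For part (ii), again take $M\in F(I)$ with $f|_M$ strong-$K|_M$-Cauchy. The point is that $K\subseteq L$ gives $K|_M=\{A\cap M:A\in K\}\subseteq\{A\cap M:A\in L\}=L|_M$. Applying Note \ref{5}(ii) to the PM space $M$ (with the restricted ideals $K|_M\subseteq L|_M$) and the function $f|_M$, we conclude that $f|_M$ is strong-$L|_M$-Cauchy. Since $M\in F(I)$, this is exactly the statement that $f$ is strong-$I^L$-Cauchy.

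I do not expect a genuine obstacle here; the only things to check carefully are the two inclusions $F(I)\subseteq F(J)$ and $K|_M\subseteq L|_M$, both of which are immediate from the definitions, together with a single invocation of Note \ref{5}(ii) for the restricted setting. If one wanted to avoid even that invocation, part (ii) could instead be argued directly from Lemma \ref{11}-style reasoning using the characterization in the Lemma on equivalent forms of strong-$I$-Cauchyness, but routing through Note \ref{5}(ii) is the cleanest route.
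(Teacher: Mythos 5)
Your proposal is correct and follows essentially the same route as the paper: part (i) uses $F(I)\subseteq F(J)$ so that the same witness set $M$ works, and part (ii) uses $K|_M\subseteq L|_M$ together with Note \ref{5}(ii) applied to the restriction $f|_M$. No gaps; your extra care in verifying the two inclusions is the only (harmless) elaboration beyond what the paper writes.
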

\begin{proof}
(i)If $f:S\rightarrow P$ is strong-$I^K$-Cauchy then there is a subset $M\in F(I)$ such that $f|_M$ is strong-$K|_M$-Cauchy. Since $F(I)\subseteq F(J)$, we have $M\in F(J)$. This means that $f$ is also strong-$J^K$-Cauchy.\\
(ii) As $K\subseteq L$ implies $K|_M\subseteq L|_M$. From note \ref{5}(ii) we get that if $f|_M$ is strong-$K|_M$-Cauchy then it is also strong-$L|_M$-Cauchy i.e. $f$ is strong-$I^L$-Cauchy.
\end{proof}
\begin{theorem}
Let $f:S\rightarrow P$ be a map, $I,K$ be ideal on the arbitrary set $S$ and $P$ be a PM space. then\\
(i) $f$ is strong-$I$-Cauchy if and only if it is strong-$I^I$-Cauchy. and\\
(ii) $f$ is strong-$I^K$-Cauchy if and only if it is strong-$(I\vee K)^K$-Cauchy.
\end{theorem}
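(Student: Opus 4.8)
The plan is to mirror, step for step, the proof of the corresponding convergence theorem, replacing Lemma~\ref{0} by Lemma~\ref{11} and the defining clause of strong-$I^K$-convergence by that of strong-$I^K$-Cauchyness, and invoking the equivalence lemma for strong-$I$-Cauchy functions (the one above giving: $f$ is strong-$I$-Cauchy $\Leftrightarrow$ for every $t>0$ there is $m$ with $\{s:f(s)\in\mathcal{N}_{f(m)}(t)\}\in F(I)$) whenever I need to switch between the $I$-formulation and the $F(I)$-formulation of ``Cauchy''. Throughout I will use freely that $I|_M\subseteq I$ by heredity of the ideal, and that $F(I|_M)=\{G\cap M:G\in F(I)\}$, as recorded in the note on trace ideals.

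For (i): the implication $\Rightarrow$ is immediate from Lemma~\ref{11} with $K=I$, since a strong-$I$-Cauchy function is in particular strong-$K$-Cauchy with $K=I$, hence strong-$I^I$-Cauchy. For $\Leftarrow$, assume $f$ is strong-$I^I$-Cauchy and fix $M\in F(I)$ with $f|_M$ strong-$I|_M$-Cauchy. Given $t>0$, choose the witness $m\in M$, so that $B:=\{s\in M:f(s)\notin\mathcal{N}_{f(m)}(t)\}\in I|_M\subseteq I$. Since $\{s\in S:f(s)\notin\mathcal{N}_{f(m)}(t)\}\subseteq(S\setminus M)\cup B$ and $S\setminus M\in I$, the left-hand set lies in $I$; as $t>0$ was arbitrary, $f$ is strong-$I$-Cauchy.

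For (ii): the implication $\Rightarrow$ uses only $I\subseteq I\vee K$, whence $F(I)\subseteq F(I\vee K)$, so any $M\in F(I)$ witnessing strong-$I^K$-Cauchyness also lies in $F(I\vee K)$ and witnesses strong-$(I\vee K)^K$-Cauchyness. For $\Leftarrow$, take $M\in F(I\vee K)$ with $f|_M$ strong-$K|_M$-Cauchy. Assuming (as the paper does) that $I\vee K$ is non-trivial, write $M=M_1\cap M_2$ with $M_1\in F(I)$ and $M_2\in F(K)$. I claim $f|_{M_1}$ is strong-$K|_{M_1}$-Cauchy: fix $t>0$, take the witness $m\in M\subseteq M_1$ and $G\in F(K)$ with $G\cap M\subseteq\{s\in M:f(s)\in\mathcal{N}_{f(m)}(t)\}$. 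Then, since $M\subseteq M_1$,
\[
\{s\in M_1:f(s)\in\mathcal{N}_{f(m)}(t)\}\ \supseteq\ \{s\in M:f(s)\in\mathcal{N}_{f(m)}(t)\}\ \supseteq\ G\cap M=(G\cap M_2)\cap M_1,
\]
and $G\cap M_2\in F(K)$, so the left-hand set contains a member of $F(K|_{M_1})$ and therefore belongs to $F(K|_{M_1})$. By the equivalence lemma this says $f|_{M_1}$ is strong-$K|_{M_1}$-Cauchy, and since $M_1\in F(I)$ we conclude $f$ is strong-$I^K$-Cauchy.

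The only step needing genuine care is the converse of (ii): one must carry the decomposition $M=M_1\cap M_2$ through, observe that enlarging the domain from $M$ to the larger set $M_1$ can only enlarge the ``good'' set $\{s:f(s)\in\mathcal{N}_{f(m)}(t)\}$, and then use closure of the trace filter $F(K|_{M_1})$ under supersets inside $M_1$. Everything else is a direct transcription of the convergence argument, and no hypothesis beyond the (tacit) non-triviality of $I\vee K$ — needed so that $F(I\vee K)=\{G\cap H:G\in F(I),H\in F(K)\}$ is available — is used.
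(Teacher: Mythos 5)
Your proposal is correct and follows essentially the same route as the paper's own proof: part (i) forward via Lemma~\ref{11} with $K=I$, part (ii) forward via $F(I)\subseteq F(I\vee K)$, and the converse of (ii) via the decomposition $M=M_1\cap M_2$ and superset-closure of the trace filter $F(K|_{M_1})$. The only cosmetic difference is that in the converse of (i) you argue on the ideal side (complements in $I$) where the paper argues on the filter side, which is an immediate translation via the stated equivalence lemma.
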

\begin{proof}
(i) Suppose that $f$ is strong-$I$-Cauchy. Then by lemma \ref{11} it is strong-$I^I$-Cauchy by taking $K=I$.\\
Conversely, let $f$ is strong-$I^I$-Cauchy so there is a set $M\in F(I)$ such that $f|_M$ is strong-$I|_M$-Cauchy. Then for any strong-$t$-neighborhood $\mathcal{N}_f(q)(t)$ of $f(q)$, $q\in S$ the set $C$(say)=$\{p\in S:f(p)\in \mathcal{N}_f(q)(t)\}\cap M\in F(I|_M)$.  So there exists $G\in F(I)$ such that $C=G\cap M$. Clearly $G\cap M\in F(I)$ and $G\cap M\subseteq f^{-1}(\mathcal{N}_{f(q)}(t))$ and so $f^{-1}(\mathcal{N}_{f(q)}(t))\in F(I)$.\\
(ii) Suppose that $f$ is strong-$I^K$-Cauchy. Then there is a set $M\in F(I)$ such that $f|_M$ is strong-$K|_M$-Cauchy. Clearly if $M\in F(I)$ then $M\in F(I\vee K)$. Therefore $f$ is also strong-$(I\vee K)^K$-Cauchy.\\
Conversely, let $f$ is strong-$(I\vee K)^K$-Cauchy. So there is a set $M\in F(I\vee K)$ such that $f|_M$ is strong-$K|_M$-Cauchy. Then for any strong-$t$-neighborhood $\mathcal{N}_{f(q)}(t)$, $q\in S$ there exists $G\in F(K)$ such that $f^{-1}(\mathcal{N}_{f(q)}(t))\cap M=G\cap M$.
Since $M\in F(I\vee K)$, then $M=M_1\cap M_2$ for some $M_1\in F(I)$ and $M_2\in F(K)$. Now we have $$f^{-1}(\mathcal{N}_{f(q)}(t))\cap M_1\supseteq f^{-1}(\mathcal{N}_{f(q)}(t))\cap M=(G\cap M_2)\cap M_1$$
Since $G\cap M_2\in F(K)$, this shows that $f^{-1}(\mathcal{N}_{f(q)}(t))\cap M_1\in F(K|_{M_1})$ i.e. $f$ is strong-$I^K$-Cauchy.
\end{proof}
\section{Strong-$I^K$-Limit Points}
Since we are working with function, we modify the definition of strong-$I$-limit points which is given in \cite{38} in the following way:
\begin{definition}
Let $f:S\rightarrow P$ be a function and $I$  be non-trivial ideal of $S$. Then $q\in P$ is called a strong-$I$-limit point of $f$ if there exists a set $M\subset S$ such that $M\notin I$ and the function $g:S\rightarrow P$ defined by
\[g(s)=\left\{\begin {array}{ll}
        f(s) & \mbox{if $s\in M$} \\
		q & \mbox{if $s\notin M$}
		\end{array}
		\right. \]
is Fin$(S)$-convergent to $q$.
\end{definition}
In the definition of strong-$I^K$-limit point we simply replace the finite ideal by an arbitrary ideal on the set $S$.
\begin{definition}
Let $f:S\rightarrow P$ be a function and $I,K$  be two non-trivial ideals of $S$. Then $q\in P$ is called a strong-$I^K$-limit point of $f$ if there exists a set $M\subset S$ such that $M\notin I,K$ and the function $g:S\rightarrow P$ defined by
\[g(s)=\left\{\begin {array}{ll}
        f(s) & \mbox{if $s\in M$} \\
		q & \mbox{if $s\notin M$}
		\end{array}
		\right. \]
is strong-$K$-convergent to $q$.
\end{definition}
We denote respectively by $I(\Lambda_s(f))$ and $I^K(\Lambda(f))$ the collection of all strong-$I$ and strong-$I^K$-limit points of $f$.
\begin{theorem}
If $K$ is an admissible ideal and $K\subset I$ then $I(\Lambda_s(f))\subset I^K(\Lambda_s(f))$.
\end{theorem}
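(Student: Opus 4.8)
The plan is to reuse the very same witnessing set. Suppose $q\in I(\Lambda_s(f))$. By definition there is a set $M\subseteq S$ with $M\notin I$ such that the function $g\colon S\rightarrow P$ given by $g(s)=f(s)$ for $s\in M$ and $g(s)=q$ for $s\notin M$ is $\mathrm{Fin}(S)$-convergent to $q$. I claim that this same $M$, together with this same $g$, witnesses $q\in I^K(\Lambda_s(f))$.

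First I would verify the two membership requirements of the definition of a strong-$I^K$-limit point. Since $M\notin I$ and $K\subset I$, we get at once $M\notin K$; hence $M\notin I$ and $M\notin K$, i.e. $M\notin I,K$. Next I would upgrade the mode of convergence of $g$. Recall that $\mathrm{Fin}(S)$-convergence of a function is exactly strong-$I$-convergence for the ideal $I=\mathrm{Fin}(S)$. Since $K$ is admissible it contains every singleton of $S$, so $\mathrm{Fin}(S)\subseteq K$, and Note \ref{20}(ii) (monotonicity of strong-$I$-convergence in the ideal) applied with $I_1=\mathrm{Fin}(S)\subseteq K=I_2$ shows that $g$ is strong-$K$-convergent to $q$. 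Thus $M$ satisfies every clause in the definition of a strong-$I^K$-limit point of $f$, so $q\in I^K(\Lambda_s(f))$, and since $q$ was an arbitrary element of $I(\Lambda_s(f))$ the desired inclusion follows.

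I do not expect a genuine obstacle here: the argument is a short chain of inclusions and a single invocation of Note \ref{20}(ii). The only points that need a little care are the (harmless) identification of $\mathrm{Fin}(S)$-convergence with strong-$\mathrm{Fin}(S)$-convergence, so that the monotonicity note applies, and keeping track of the precise role of the hypotheses — admissibility of $K$ is used only to secure $\mathrm{Fin}(S)\subseteq K$, while the inclusion $K\subset I$ is used only to force $M\notin K$ out of $M\notin I$.
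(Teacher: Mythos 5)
Your proposal is correct and is essentially the paper's own argument: both keep the same witnessing set $M$ and function $g$, derive $M\notin K$ from $M\notin I$ and $K\subset I$, and upgrade Fin$(S)$-convergence of $g$ to strong-$K$-convergence via the admissibility of $K$ (the paper does this directly by noting finite sets lie in $K$, you do it by citing the monotonicity note — the same fact). No gap.
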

\begin{proof}
Let $q\in I(\Lambda_s(f))$. Since $q$ is a strong-$I$-limit point of the function $f:S\rightarrow P$, then there exists a set $M\notin I$ such that and the function $g:S\rightarrow P$ defined by
\[g(s)=\left\{\begin {array}{ll}
        f(s) & \mbox{if $s\in M$} \\
		q & \mbox{if $s\notin M$}
		\end{array}
		\right. \]
is Fin$(S)$-convergent to $q$. So for any strong-t-neighborhood $\mathcal{N}_q(t)$ the set $\{s:g(s)\notin \mathcal{N}_q(t)\}\in$ Fin$(S)$. i.e.
$\{s:g(s)\notin \mathcal{N}_q(t)\}$ is a finite set. So $\{s:g(s)\notin \mathcal{N}_q(t)\}\in K$ [as $K$ is n admissible ideal.] Therefore $g$ is strong-$K$-convergent function. Again $M\notin I$ and $K\subset I$ so $M\notin I,K$. Thus $q$ is strong-$I^K$-limit point of $f$ i.e. $q\in I^K(\Lambda_s(f))$. Hence the theorem is proved.
\end{proof}
\begin{theorem}
If every function $f:S\rightarrow P$ has a strong-$I^K$-limit point then every infinite set $A$ in $P$ has an $\omega$-accumulation point when cardinality of $S$ is less or equal to cardinality of $A$.
\end{theorem}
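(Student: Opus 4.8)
The plan is to feed the hypothesis a test function manufactured from $A$ and to read off an $\omega$-accumulation point from one of its strong-$I^K$-limit points. Since $|S|\le|A|$, fix an injection $f:S\to A$ and regard it as a map $f:S\to P$. By hypothesis $f$ has a strong-$I^K$-limit point $q\in P$, so there is a set $M\subseteq S$ with $M\notin I$ and $M\notin K$ such that the function
\[
g(s)=\begin{cases} f(s) & \text{if } s\in M\\ q & \text{if } s\notin M\end{cases}
\]
is strong-$K$-convergent to $q$. I will show $q$ is an $\omega$-accumulation point of $A$.

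Fix $t>0$ and unwind the strong-$K$-convergence of $g$: the set $g^{-1}(P\setminus\mathcal{N}_q(t))$ lies in $K$. Since $g(s)=q\in\mathcal{N}_q(t)$ for every $s\notin M$, this set equals $E_t:=\{s\in M:f(s)\notin\mathcal{N}_q(t)\}$, hence $E_t\in K$. From $M=(M\setminus E_t)\cup E_t$ and $M\notin K$ we conclude $M\setminus E_t\notin K$. Now I would use that $K$ is admissible (in line with the other limit-point results of this section): a subset of $S$ not belonging to an admissible ideal must be infinite, so $M\setminus E_t$ is infinite. As $f$ is injective, $\{f(s):s\in M\setminus E_t\}$ is then an infinite subset of $A$ lying inside $\mathcal{N}_q(t)$, so $\mathcal{N}_q(t)$ contains infinitely many points of $A$.

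Because $\{\mathcal{N}_q(t):t>0\}$ is a base of strong neighbourhoods at $q$, the last sentence says every strong neighbourhood of $q$ meets $A$ in an infinite set, i.e.\ $q$ is an $\omega$-accumulation point of $A$. The one substantive step is the inference ``$M\setminus E_t\notin K\Rightarrow M\setminus E_t$ infinite'', which is precisely where admissibility of $K$ enters and without which the statement fails (take $S=\mathbb{N}$, $I=K=\{\emptyset,\{1\}\}$: every map $\mathbb{N}\to P$ has a strong-$I^K$-limit point via $M=\{2\}$, yet $P=\mathbb{R}$ carries infinite sets with no accumulation point). Everything else is routine unwinding of the definitions of strong-$K$-convergence and of a strong-$I^K$-limit point; note that $M\notin I$ is not even needed.
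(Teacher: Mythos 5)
Your proof is correct and follows essentially the same route as the paper's: restrict an injection $S\to A$ to the set $M$ witnessing the strong-$I^K$-limit point, use strong-$K$-convergence of $g$ to get $M\setminus f^{-1}(\mathcal{N}_q(t))\in K$, deduce that $M\cap f^{-1}(\mathcal{N}_q(t))\notin K$ (the paper phrases this as $f^{-1}(\mathcal{N}_q(t))\notin K$) from $M\notin K$, and conclude infinitude from admissibility plus injectivity. Your remark that admissibility of $K$ is genuinely needed --- with the counterexample $I=K=\{\emptyset,\{1\}\}$ --- is well taken: the paper's own proof makes exactly the same inference ``not in $K$ $\Rightarrow$ infinite'' without listing admissibility among the hypotheses, so the statement should be read with $K$ admissible.
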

\begin{proof}
Let $A$ be an infinite  set. Define an injective function $f: S\rightarrow A\subset P$. Then $f$ has an strong-$I^K$-limit point say $q$. Then there exists a set $M\subset S$ such that $M\notin I,K$ and the function $g:S\rightarrow P$ given by
\[g(s)=\left\{\begin {array}{ll}
        f(s) & \mbox{if $s\in M$} \\
		q & \mbox{if $s\notin M$}
		\end{array}
		\right. \]
is strong-$K$-convergent to $q$. Let $\mathcal{N}_q(t)$ be a strong-t-neighborhood then $g^{-1}(\mathcal{N}_q(t))=(S\setminus M)\cup f^{-1}(\mathcal{N}_q(t))=S\setminus (M\setminus f^{-1}(\mathcal{N}_q(t)))\in F(K)$ i.e. $M\setminus f^{-1}(\mathcal{N}_q(t))\in K$. So $f^{-1}(\mathcal{N}_q(t))\notin K$.[For if $f^{-1}(\mathcal{N}_q(t))\in K$ then we get $M\in K$, which is a contradiction.] So $\{s:f(s)\in \mathcal{N}_q(t)\}$ is an infinite set. Consequently $\mathcal{N}_q(t)$ contains infinitely many points of the function $f(s)$ in $P$. So $\mathcal{N}_q(t)$ contains infinitely many elements of $A$. Thus $q$ becomes $\omega$-accumulation point of $A$.
\end{proof}
\section{Acknowledgment}
The second author is grateful to the University of Burdwan, W.B., India for providing State Fund Fellowship during the preparation of this work.

\end{document}